\newtheorem{theorem}{Theorem}[section]
\newtheorem{proposition}[theorem]{Proposition}
\newtheorem{lemma}[theorem]{Lemma}
\newtheorem{corollary}[theorem]{Corollary}
\theoremstyle{definition}
\newtheorem{definition} [theorem]{Definition}
\theoremstyle{remark}
\newtheorem{remark}[theorem]{Remark}
 \numberwithin{equation}{section}
\begin{document}

\title[The STFT of distributions of exponential type]{The short-time Fourier transform of distributions of exponential type and Tauberian theorems for shift-asymptotics}

\author[S. Kostadinova]{Sanja Kostadinova}
\address{Faculty of Electrical Engineering and Information Technologies, Ss. Cyril and Methodius University, Rugjer Boshkovik bb, 1000 Skopje, Macedonia}
\email{ksanja@feit.ukim.edu.mk}

\author[S. Pilipovi\'{c}]{Stevan Pilipovi\'{c}}
\address{Department of Mathematics and Informatics, University of Novi Sad, Trg Dositeja Obradovi\'ca 4, 21000 Novi Sad, Serbia}
\email {stevan.pilipovic@dmi.uns.ac.rs}

\author[K. Saneva]{Katerina Saneva}
\address{Faculty of Electrical
Engineering and Information Technologies, Ss. Cyril
and Methodius University, Rugjer Boshkovik bb, 1000 Skopje, Macedonia}
\email{saneva@feit.ukim.edu.mk}

\author[J. Vindas]{Jasson Vindas}
\address{Department of Mathematics, Ghent University, Krijgslaan 281 Gebouw S22, 9000 Gent, Belgium}
\email{jvindas@cage.Ugent.be}

\subjclass[2010]{Primary 81S30, 40E05; Secondary 26A12, 41A60, 46F05, 46F12}
\keywords{short-time Fourier transform; Tauberian theorems; modulation spaces; $\mathcal {K}'_1$ and $\mathcal{K}_{1}$; distributions of exponential type; Silva tempered ultradistributions (tempered ultra-hyperfunctions); S-asymptotics; regularly varying functions}

\thanks{Research supported by the project 174024 of the Ministry of Education and Sciences of Serbia. S. Pilipovi\'{c} and J. Vindas also acknowledge support from Ghent University, through the BOF-grant number 01T00513.}

\begin{abstract}
We study the short-time Fourier transform on the space $\mathcal{K}_{1}'(\mathbb{R}^n)$ of distributions of exponential type. We give characterizations of  $\mathcal{K}_{1}'(\mathbb{R}^n)$ and some of its subspaces in terms of modulation spaces. We also obtain various Tauberian theorems for the short-time Fourier transform.
\end{abstract}

\maketitle


\section{Introduction}
The short-time Fourier transform (STFT) is a very effective device in the study of function spaces. The investigation of major test function spaces and their duals through time-frequency representations has attracted much attention. For example, the Schwartz class $\mathcal{S}(\mathbb{R}^{n})$ and the space of tempered distributions $\mathcal{S}'(\mathbb{R}^{n})$ were studied in \cite{GZ1} (cf. \cite{gr01}). Characterizations of Gelfand-Shilov spaces and ultradistribution spaces by means of the short-time Fourier transform and modulation spaces are also known \cite{GZ2,OMA,toft} (cf. \cite{cordero,c-p-r-t}).

The purpose of this paper is two folded. On the one hand we study the short-time Fourier transform in the context of the space $\mathcal{K}'_{1}(\mathbb{R}^{n})$ of distributions of exponential type, the dual of the space of exponentially rapidly decreasing smooth functions $\mathcal{K}_{1}(\mathbb{R}^{n})$ (see Section \ref{notations} for the definition of all spaces employed in this article). We will obtain various characterizations of $\mathcal{K}'_{1}(\mathbb{R}^{n})$ and related spaces via the short-time Fourier transform. The space $\mathcal{K}'_{1}(\mathbb{R}^{n})$  was introduced by Silva \cite{SS} and Hasumi \cite{hasumi} in connection with the so-called space of Silva tempered ultradistributions $\mathcal{U}'(\mathbb{C}^{n})$. Let us mention that $\mathcal{K}_{1}'(\mathbb{R}^{n})$ and $\mathcal{U}'(\mathbb{R}^{n})$ were also studied by Morimoto through the theory of ultra-hyperfunctions \cite{Morimoto} (cf. \cite{PM}). We refer to \cite{estrada-vindasB,hoskins-pinto,SS2,Z} for some applications of the Silva spaces. Our second goal is to present a new kind of Tauberian theorems. In such theorems the exponential asymptotics of functions and distributions can be obtained from those of the short-time Fourier transform. Let us state a sample of our results. In the next statement $L$ stands for a locally bounded Karamata slowly varying function \cite{BGT,Korevaarbook}, namely, a positive function that is asymptotically self-similar in the sense:
$$
\lim_{x\to\infty} \frac{L(a x)}{L(x)}= 1, \  \  \ \forall a>0.
$$

\begin{theorem}\label{STFT th1} Let $f$ be a positive non-decreasing function on $[0,\infty)$ and let $\psi$ be a positive function such that $\psi''\in L^{1}_{loc}(\mathbb{R})$ and $\int_{-\infty}^{\infty}(\psi(t)+ |\psi'(t)|+|\psi'' (t)|)e^{\beta t+\varepsilon|t|} dt <\infty$, where $\beta\geq0$ and $\varepsilon>0$. Suppose that the limits
\begin{equation}
\label{eq1}\lim_{x\to\infty}\frac{e^{2\pi i \xi x}}{e^{\beta x}L(e^{x})} \int _{0}^{\infty} f(t)\psi(t-x)e^{-2\pi i\xi t} \ dt= J(\xi)
\end{equation} exist for every $\xi\in\mathbb{R}$, then
\begin{equation}
\label{eq2}
\lim_{x\to\infty} \frac{f(x)}{e^{\beta x}L(e^{x})}= \frac{J(0)}{\int_{-\infty}^{\infty}\psi(t)e^{\beta t} dt}\ .
\end{equation}
Furthermore, if $L$ satisfies $L(xy)\leq A L(x)L(y)$ for all $x,y>0$ and some constant $A$, the requirements over $\psi$ can be relaxed to $\int_{-\infty}^{\infty}(\psi (t)+|\psi' (t)|+|\psi'' (t)|)L(e^{|t|})e^{\beta t} dt<\infty$.
\end{theorem}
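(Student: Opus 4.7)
The plan is to deduce \eqref{eq2} from \eqref{eq1} via a Helly selection argument combined with Fourier uniqueness and the self-similarity forced by slow variation. After the change of variable $u=t-x$, \eqref{eq1} reads
\[
\lim_{x\to\infty}\int f_{x}(u)\,\psi(u)\,e^{-2\pi i\xi u}\,du=J(\xi),\qquad f_{x}(u):=\frac{f(x+u)}{e^{\beta x}L(e^x)},
\]
where $f$ is extended by zero to $(-\infty,0)$. Each $f_x$ is non-decreasing in $u$. Specializing to $\xi=0$, using $\psi\ge 0$, picking $a>0$ with $\int_{-a}^{a}\psi>0$, and applying monotonicity yields $f(x-a)\int_{-a}^{a}\psi\le\int f(s+x)\psi(s)\,ds$; combined with slow variation of $L$ this gives the a priori bound $f(x)=O(e^{\beta x}L(e^x))$.

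With that bound in hand I would invoke Helly's selection theorem to extract, from any $x_n\to\infty$, a subsequence along which $f_{x_n}\to F$ at continuity points of some non-decreasing limit $F$. The a priori bound together with Potter's bounds provides the pointwise majorant $f_x(u)\psi(u)\le C\psi(u)\,e^{(\beta+\delta)|u|}$ for any $\delta>0$ and large $x$ (split into $u\ge 0$, controlled by the growth bound, and $u<0$, controlled by $f_x(u)\le f_x(0)=O(1)$); for $\delta<\varepsilon$ this is $L^{1}$ by the integrability hypothesis on $\psi$. Dominated convergence then promotes the pointwise convergence to convergence of the weighted Fourier integral, and comparison with \eqref{eq1} yields
\[
\int F(u)\,\psi(u)\,e^{-2\pi i\xi u}\,du=J(\xi)\qquad\text{for every }\xi\in\mathbb{R}.
\]
Fourier uniqueness and the positivity of $\psi$ therefore determine $F$ from $J$.

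To identify $F$ explicitly I would exploit the shift-symmetry
\[
f_{x+h}(u)=\frac{L(e^x)}{e^{\beta h}L(e^{x+h})}\,f_x(u+h).
\]
Applying the same subsequence argument to $x_n+h$ shows $f_{x+h}(u)\to F(u)$, while the right-hand side tends to $e^{-\beta h}F(u+h)$ by slow variation of $L$; hence $F(u+h)=e^{\beta h}F(u)$ for every $h\in\mathbb{R}$, forcing $F(u)=F(0)e^{\beta u}$. Substituting back into $\int F\psi\,du=J(0)$ evaluates the constant as $F(0)=J(0)/\int\psi(t)e^{\beta t}\,dt$, and since every subsequence of $\{f_x\}$ produces the same $F$, the convergence $f_x\to F$ is unconditional. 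Taking $u=0$ delivers \eqref{eq2}.

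The principal obstacle is the uniform domination used to pass to the limit under the integral. For a generic slowly varying $L$ one has only Potter's estimate $L(e^{x+u})/L(e^x)\le A_{\delta}e^{\delta|u|}$, so the margin $e^{\varepsilon|t|}$ in the integrability hypothesis on $\psi$ is essential to absorb the Potter factor (any $\delta<\varepsilon$ works). In the refined variant, the submultiplicativity $L(xy)\le AL(x)L(y)$ replaces Potter's estimate by the sharper pointwise bound $L(e^{x+u})/L(e^x)\le AL(e^{|u|})$, matching precisely the relaxed hypothesis $\int(\psi+|\psi'|+|\psi''|)L(e^{|t|})e^{\beta t}\,dt<\infty$. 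The derivative conditions on $\psi$ play no direct role in the Helly scheme above; they most likely enter through the modulation-space characterizations of $\mathcal{K}'_1$ established earlier in the paper, providing the distributional framework within which the STFT manipulations are made rigorous.
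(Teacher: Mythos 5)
Your route is genuinely different from the paper's. The paper verifies $\psi\in M^{1}_{m_{\varepsilon}}(\mathbb{R})$ (this is the only place the hypotheses on $\psi'$, $\psi''$ are used), proves the same crude bound $f(t)=O(e^{\beta t}L(e^{t}))$ to get $f\in M^{\infty}_{1/c_{0}}(\mathbb{R})$, and then feeds everything into the distributional machinery: Theorem \ref{te2}, hence Theorem \ref{te1}, Theorem \ref{omega bounded}, Banach--Steinhaus, and the density of time-frequency shifts obtained from \eqref{desingular}. Your argument is instead a self-contained selection proof (Helly plus dominated convergence, Fourier uniqueness since $\psi>0$, and the functional equation $F(u+h)=e^{\beta h}F(u)$ forced by slow variation), and, once correct, it would even show that the conditions on $\psi'$ and $\psi''$ are not needed for \eqref{eq1}$\Rightarrow$\eqref{eq2}; in the paper they only serve to place $\psi$ in the modulation space. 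The small omissions in your identification step are routine: $f_{x_{n}+h}\to F$ should be justified by noting that any Helly limit $G$ of the shifted sequence satisfies $\int G\psi e^{-2\pi i\xi u}du=J(\xi)$, so $G\psi=F\psi$ a.e.\ and $G=F$; and the functional equation holds first for a.e.\ $u$ (each $h$), then everywhere by monotonicity.

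There is, however, a concrete gap in your domination step. The majorant $C\psi(u)e^{(\beta+\delta)|u|}$ is not integrable on $(-\infty,0)$ under the stated hypothesis once $\beta>0$: there the hypothesis only gives $\int_{-\infty}^{0}\psi(u)e^{-(\beta-\varepsilon)|u|}du<\infty$, so your majorant would need $\varepsilon>2\beta+\delta$. The justification you give for $u<0$, namely $f_{x}(u)\le f_{x}(0)=O(1)$, only yields the majorant $C\psi(u)$, and $\psi$ itself need not be integrable on $(-\infty,0)$ when $\beta\ge\varepsilon$, since the weight $e^{\beta t+\varepsilon|t|}$ decays there. The repair is to keep the exponential decay you discard: writing $c(t)=e^{\beta t}L(e^{|t|})$, the crude bound $f\le A c$ on $[y_{0},\infty)$ together with the two-sided Potter-type estimate \eqref{Seq5} (and an elementary bound $f_{x}(u)\le f(y_{0})/(e^{\beta x}L(e^{x}))\le C e^{\beta u+\delta|u|}$ on the residual range $0\le x+u\le y_{0}$, using $x\ge|u|$ and $1/L(e^{x})\le C_{\delta}e^{\delta x}$) gives $f_{x}(u)\le C_{\delta}e^{\beta u+\delta|u|}$ uniformly for large $x$. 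The majorant $C_{\delta}\psi(u)e^{\beta u+\delta|u|}$ is integrable for $\delta<\varepsilon$ exactly by hypothesis, and with $A L(e^{|u|})$ in place of $e^{\delta|u|}$ it matches the relaxed hypothesis in the submultiplicative variant. With this correction your proof goes through.
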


It turns out that Theorem \ref{STFT th1} can be deduced from a more general type of Tauberian theorems. In Section \ref{s-asym} we  shall give precise descriptions of the S-asymptotic properties \cite{PSV} of a distribution in terms of the asymptotic behavior of its short-time Fourier transform (S-asymptotics stands for shift-asymptotics). The notion of $S$-asymptotics measures the asymptotic behavior of the translates of a distribution $T_{-h}f$ with respect the parameter $h$ and it is closely related to the extension of Wiener's Tauberian ideas \cite{Korevaarbook,Wiener} to the context of Schwartz distributions \cite{pilipovic93}. We would like to point out that there is an extensive literature about Tauberian theorems for Schwartz distributions, see, e.g., the monographs \cite{PSV,VDZ} and references therein. We also mention the article \cite{SAS}, where Tauberian theorems for the short-time Fourier transform were also studied, though with a different approach.

The plan of this article is as follows. In Section \ref{STFT} we shall present continuity theorems for the STFT and its adjoint on the test function space $\mathcal{K}_{1}(\mathbb{R}^{n})$ and the topological tensor product $\mathcal K_{1}({\mathbb R^n})\widehat{\otimes} \mathcal U(\mathbb C^n)$, where $\mathcal U(\mathbb C^n)$ is the space of entire rapidly decreasing functions in any horizontal band of $\mathbb C^n$. We then use such continuity results to develop a framework for the STFT on $\mathcal{K}_{1}'(\mathbb{R}^{n})$. We also introduce in this paper the space $\mathcal{B}'_{\omega}(\mathbb{R}^{n})$ of $\omega$-bounded distributions and its subspace $\dot{\mathcal{B}}'_{\omega}(\mathbb{R}^{n})$ with respect to an exponentially moderate weight $\omega$; when $\omega=1$, these spaces coincide with the well-known Schwartz spaces \cite[p. 200]{Schwartz} of bounded distributions $\mathcal{B}'(\mathbb{R}^{n})$ and $\dot{\mathcal{B}}'(\mathbb{R}^{n})$, which are of great importance in the study of convolution and growth properties of distributions. Notice that the distribution space $\mathcal{B}'(\mathbb{R}^{n})$ also plays an important role in Tauberian theory; see, for instance, Beurling's theorem \cite[p. 230]{donoghue} and the distributional Wiener Tauberian theorem from \cite{pilipovic93}.  The spaces $\mathcal{B}'_{\omega}(\mathbb{R}^{n})$ and $\dot{\mathcal{B}}'_{\omega}(\mathbb{R}^{n})$ will be characterized in Section \ref{characterization omega bounded} in terms of the short-time Fourier transform and also in terms properties of the set of translates of their elements.
Section \ref{modulation spaces} is devoted to the characterization of $\mathcal{K}_{1}'(\mathbb{R}^{n})$ and related spaces via modulation spaces. The conclusive Section \ref{s-asym} deals with Tauberian theorems, where in particular we give a proof of Theorem \ref{STFT th1}. Our Tauberian hypotheses are actually in terms of membership to suitable modulation spaces, this allows us to reinterpret the $S$-asymptotics in the weak$^{\ast}$ topology of modulation spaces.

\section{{Preliminaries}}\label{notations}


\subsection{Notation} We use the constants in the Fourier transform as
\begin{equation}\label{peq1}\mathcal{F}(\varphi)(\xi)=\widehat {\varphi}(\xi)=\int_{\mathbb R ^n} e^{-2\pi i x
\cdot\xi}\varphi(x)dx .
\end{equation}
The translation and modulation operators are defined by $T_{x}f(\: \cdot \:)=f( \: \cdot \:-x)$ and $M_{\xi}f(\: \cdot \:)=e^{2\pi i \xi
\:\cdot\:}f(\:\cdot\:),$ $ x,\xi\in\mathbb R ^n. $ The operators  $M_{\xi} T_x$ and $T_x M_{\xi}$ are called time-frequency shifts and we have $M_{\xi} T_x=e^{2\pi i x\cdot \xi}T_x M_{\xi}.$ The notation $\langle f,\varphi\rangle$ means dual pairing whereas $(f,\varphi)_{L^{2}}$ stands for the $L^{2}$ inner product. All dual spaces in this article are equipped with the strong dual topology. We denote by $\check{f}$ the function (or distribution) $\check{f}(t)=f(-t)$.

\subsection{The STFT} The short-time Fourier transform (STFT) of a
function $f\in L^{2}(\mathbb R ^n)$ with respect to a window function $\psi\in L^{2}(\mathbb R ^n)$  is defined as \begin{equation} \label{defSTFT} V_{\psi} f(x,\xi )   = \langle f, \overline{M_{\xi}T_x\psi} \rangle
=\int _{{\mathbb R}^n} f(t)\overline{\psi(t-x)}e^{-2\pi i\xi\cdot t} \ dt,\, \, x,\xi \in {\mathbb R}^n. \end{equation}
There holds
$\left\| V_{\psi} f\right\| _{2} =\left\| f\right\| _{2}\left\| \psi\right\| _{2}$. The adjoint of $V_{\psi}$ is given by the mapping
\begin{equation*}
V_{\psi}^{*}F(t)=\iint_{\mathbb R^{2n}}F(x, \xi)\psi(t-x)e^{2\pi i\xi\cdot t}dxd\xi,\end{equation*}
interpreted as an $L^{2}(\mathbb{R}^{n})$-valued weak integral. If $\psi\neq 0$ and
$\gamma\in L^{2}(\mathbb R ^n) $ is a  synthesis window for $\psi$, namely, $(\gamma,\psi )_{L^{2}} \neq 0$,
then for any $f\in L^2(\mathbb R^n),$
 \begin{equation} \label{inverzna} f=\frac{1}{( \gamma
,\psi )_{L^2} } \iint \nolimits _{{\mathbb R}^{2n} } V_{\psi} f(x,\xi ) M_{\xi } T_{x} \gamma d\xi dx . \end{equation}

Whenever the dual pairing in \eqref{defSTFT} is well-defined, the
definition of $V_{\psi} f$ can be generalized for $f$ in larger classes than $L^{2}(\mathbb{R}^{n})$, for instance: $f\in \mathcal D '({\mathbb R}^{n})$ and $\psi\in \mathcal D ({\mathbb R}^{n})$. In fact,
it is enough to have $\psi\in \mathcal{A}({\mathbb R}^{n})$ and $f\in \mathcal{A}'(\mathbb{R}^{n})$, where $\mathcal{A}({\mathbb R}^{n})$ is a time-frequency shift invariant topological vector space. Note also that the inversion formula \eqref{inverzna} holds pointwise when $f$ is sufficiently regular, for instance, for function in the Schwartz class $\mathcal{S}(\mathbb{R}^{n})$. For a complete account on the STFT, we refer to \cite{gr01}.

\subsection{Spaces}\label{spaces}
\noindent The Hasumi-Silva  \cite{SS,hasumi} test function space ${\mathcal K}_1(\mathbb R ^n)$ consists of those  $\varphi\in C^{\infty}(\mathbb{R}^{n})$ for which all norms
$$\nu_k(\varphi):=\sup_{t\in {\mathbb R  ^n}, \ |\alpha|\leq k}e^{k|t|}|\varphi^{(\alpha)}(t)|, \ \ \ k\in{\mathbb N}_0,$$
are finite. The
elements of ${\mathcal K}_1(\mathbb R ^n)$ are called exponentially rapidly decreasing smooth functions. It is easy to see that ${\mathcal K}_1(\mathbb R ^n)$ is an FS-space and therefore Montel and reflexive. The space ${\mathcal K}_1(\mathbb R ^n)$ is also nuclear \cite{hasumi}.

Note that if $\varphi\in {\mathcal K}_1(\mathbb R ^n)$, then the Fourier transform (\ref{peq1}) extends to an entire function. In fact, the Fourier transform is a topological isomorphism from ${\mathcal K}_1(\mathbb R ^n)$ onto $\mathcal U(\mathbb C ^n)$, the space of entire functions which decrease faster than any polynomial in bands. More precisely, a entire function $\phi\in\mathcal{U}(\mathbb{C}^{n})$ if and only if
$$
\dot{\nu}_{k}(\phi):=\sup_{z \in \Pi_{k}}(1+|z|^{2})^{k/2}|\phi(z)|<\infty, \ \ \forall k\in{\mathbb N}_{0},
$$
where
$\Pi_{k}$ is the tube $\Pi_{k}= \mathbb{R}^{n}+i[-k,k]^{n}.$

The dual space  ${\mathcal K}'_1({\mathbb R}^n)$ consists of all distributions $f$ of exponential type, i.e., those of the form
${f=\sum_{|\alpha|\leq l}(e^{s |\: \cdot\: |}f_\alpha)^{(\alpha)}}$, where $f_\alpha\in L^{\infty}(\mathbb{R}^{n})$ \cite{hasumi}. The Fourier transform extends to a topological isomorphism $\mathcal{F}:{\mathcal K}'_1({\mathbb R}^n)\to \mathcal{U}'(\mathbb{C}^{n})$, the latter space is known as the space of Silva tempered ultradistributions \cite{hasumi} (also called the space of tempered ultra-hyperfunctions \cite{Morimoto}). The space $\mathcal{U}'(\mathbb{C}^{n})$ contains the space of analytic functionals. See also the textbook \cite{hoskins-pinto} for more information about these spaces.

We introduce a generalization of the Schwartz space of bounded distributions $\mathcal{B}'(\mathbb{R}^{n})$ \cite[p. 200]{Schwartz}. Let $\omega:\mathbb{R}^{n}\to(0,\infty)$ be an exponentially moderate weight, namely, $\omega$ is measurable and satisfies the estimate
\begin{equation}
\label{weight omega}
\omega(x+y)\leq A \omega(y) e^{a|x|}, \  \  \  x,y\in\mathbb{R}^{n} ,
\end{equation}
for some constants $A>0$ and $a\geq 0$. For instance, any positive measurable function  $\omega$ which is submultiplicative, i.e., $\omega(x+y)\leq \omega(x)\omega(y)$, and integrable near the origin must necessarily satisfy (\ref{weight omega}), as follows from the standard results about subadditive functions \cite{beurling,hille-phillips}. Extending the Schwartz space $\mathcal{D}_{L^{1}}(\mathbb{R}^{n})$, we define the Fr\'{e}chet space $\mathcal{D}_{L^{1}_{\omega}}(\mathbb{R}^{n})=\{\varphi\in C^{\infty}(\mathbb{R}^{n}):\: \varphi^{(\alpha)}\in L^{1}_{\omega}(\mathbb{R}^{n}), \forall \alpha\in\mathbb{N}_{0}^{n}\}$, provided with the family of norms
$$
\|\varphi\|_{1,\omega,k}:=\sup_{|\alpha|\leq k} \int_{\mathbb{R}^{n}} |\varphi^{(\alpha)}(t)|\omega(t)dt, \ \ \ k\in\mathbb{N}_{0}.
$$
Then, $\mathcal{B}'_{\omega}(\mathbb{R}^{n})$ stands for the strong dual of $\mathcal{D}_{L^{1}_{\omega}}(\mathbb{R}^{n})$, i.e., $\mathcal{B}'_{\omega}(\mathbb{R}^{n})=(\mathcal{D}_{L^{1}_{\omega}}(\mathbb{R}^{n}))'$. Since we have the dense embedding $\mathcal{K}_{1}(\mathbb{R}^{n})\hookrightarrow\mathcal{D}_{L^{1}_{\omega}}(\mathbb{R}^{n})$, we have $\mathcal{B}'_{\omega}(\mathbb{R}^{n})\subset \mathcal{K}'_{1} (\mathbb{R}^{n})$. We call
$\mathcal{B}'_{\omega}(\mathbb{R}^{n})$ the space of $\omega$-bounded distributions. We also define $\dot{\mathcal{B}}'_{\omega}(\mathbb{R}^{n})$ as the closure of $\mathcal{D}(\mathbb{R}^{n})$ in $\mathcal{B}'_{\omega}(\mathbb{R}^{n})$.

Next, we shall consider $\mathcal K_{1}({\mathbb R}^n)\widehat{\otimes}\mathcal U(\mathbb C^n)$, the topological tensor product space obtained as the completion of
$\mathcal K_{1}({\mathbb R}^n)\otimes\mathcal U(\mathbb C^n)$  in, say, the $\pi$- or the $\varepsilon$- topology \cite{treves}. Explicitly, the nuclearity of $\mathcal{K}_{1}(\mathbb{R}^{n})$ implies that $\mathcal K_{1}({\mathbb R}^n)\widehat{\otimes}\mathcal U(\mathbb C^n)=\mathcal K_{1}({\mathbb R}^n)\widehat{\otimes}_{\pi}\mathcal U(\mathbb C^n)=\mathcal K_{1}({\mathbb R}^n)\widehat{\otimes}_{\varepsilon}\mathcal U(\mathbb C^n)$. Thus, the topology of
$\mathcal K_{1}({\mathbb R}^n)\widehat{\otimes} \mathcal U(\mathbb C^n)$ is given by the family of the norms
$$\rho_{k}(\Phi):=\sup_{(x,z)\in {\mathbb R}^n \times \Pi_{k},\ |\alpha|\leq k}e^{k|x|}(1+|z|^{2}) ^{k/2}\left|\frac{\partial^{\alpha}} {\partial x^{\alpha}}\Phi(x,
z)\right|, \ \ k\in{\mathbb N}_0,$$
and we also obtain $(\mathcal K_{1}({\mathbb R}^n)\widehat\otimes \mathcal
U(\mathbb C^n) )'=\mathcal K'_{1}({\mathbb R}^n)\widehat\otimes \mathcal U'(\mathbb C^n)$.

Finally, let $m$ be a weight on $\mathbb{R}^{2n}$, that is, $m:\mathbb{R}^{2n}\to (0,\infty)$ is measurable and locally bounded. Then, if $p,q\in [1,\infty]$, the weighted Banach space ${L_{m}^{p,q}}(\mathbb R^{2n})$ consists of all measurable functions $F$ such that
$$
\|F\|_{L_{m}^{p,q}}:=\left(\int_{\mathbb R^n}\left(\int_{\mathbb R^n}|F(x,\xi)|^{p}m(x,\xi)^{p} dx\right)^{q/p}d\xi\right)^{1/q}<\infty.$$
(With the obvious modification when $p=\infty$ or $ q=\infty$.)


\section{Short-time Fourier transform of distributions of exponential type}\label{STFT}
In this section we study the mapping properties of the STFT on the space of distributions of exponential type. Note that the STFT extends to the sesquilinear mapping $(f,\psi)\mapsto V_{\psi}f$ and its adjoint induces the bilinear mapping $(F,\psi)\mapsto V^{\ast}_{\psi}F$.

We start with the test function space $\mathcal{K}_{1}(\mathbb{R}^{n})$. If $f,\psi \in \mathcal{K}_{1}(\mathbb{R}^{n})$, then we immediately get that (\ref{defSTFT}) extends to a holomorphic function in the second variable, namely, $V_{\psi}f(x,z)$ is entire in $z\in \mathbb{C}^{n}$. We write in the sequel $z=\xi+i\eta$ with $\xi,\eta\in \mathbb{R}^{n}$. Observe also that an application of the Cauchy theorem shows that if $\Phi\in \mathcal K_{1}({\mathbb R^n})\widehat{\otimes} \mathcal U(\mathbb C^n)$ and $\psi\in \mathcal{K}_{1}(\mathbb{R}^{n})$, then for arbitrary $\eta\in\mathbb{R}^{n}$ we may write $V^{\ast}_{\psi}\Phi$ as
\begin{equation}\label{adjoint} V_{\psi}^{*}\Phi(t) = \iint_{\mathbb R ^{2n}}\Phi(x, \xi+i\eta)\psi(t-x) e^{2\pi i(\xi+i\eta) \cdot t}dxd\xi.
\end{equation}

Our first proposition deals with the range and continuity properties of $V$ and $V^{\ast}$ on test function spaces.

\begin{proposition}\label{neprekinatSTFT} The following mappings are continuous:
\begin{itemize}
\item [$(i)$] $V:\mathcal K_{1}({\mathbb R}^n)\times \mathcal K_{1}({\mathbb R}^n)\to \mathcal
K_{1}({\mathbb R}^n)\widehat{\otimes} \mathcal U(\mathbb C ^n)$.
\item [$(ii)$] $V^{*}:(\mathcal K_{1}({\mathbb R^n})\widehat{\otimes} \mathcal U(\mathbb C^n))\times\mathcal K_{1}(\mathbb R^n)\to \mathcal K_{1}({\mathbb R^n}).
$
\end{itemize}
\end{proposition}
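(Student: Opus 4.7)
The plan is to reduce both continuity statements to explicit estimates of the seminorms defining the target spaces in terms of the seminorms $\nu_k$ and $\rho_k$ of the source spaces.

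For part $(i)$, I would first verify that $V_\psi f(x,z)$ is holomorphic in $z$: the integral $\int f(t)\overline{\psi(t-x)}\,e^{-2\pi i z\cdot t}\,dt$ converges absolutely and uniformly on $\mathbb{R}^n\times\Pi_k$ thanks to the exponential decay of $f$ and $\psi$, and differentiation under the integral in $z$ gives an entire extension. Next, for $|\alpha|\leq k$, differentiation in $x$ throws derivatives onto $\overline{\psi(t-x)}$, while the factor $(1+|z|^2)^{k/2}$ is absorbed via integration by parts in $t$: for $|\beta|\leq k$,
\[
(2\pi i z)^{\beta}\partial_x^{\alpha}V_\psi f(x,z)=(-1)^{|\alpha|}\int \partial_t^{\beta}\!\bigl[f(t)\overline{\psi^{(\alpha)}(t-x)}\bigr]e^{-2\pi i z\cdot t}\,dt,
\]
after which Leibniz distributes $\partial_t^{\beta}$ between $f$ and $\psi$. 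The key pointwise estimate is then
\[
|f^{(\beta_1)}(t)|\,|\psi^{(\alpha+\beta_2)}(t-x)|\;\leq\;\nu_m(f)\,\nu_m(\psi)\,e^{-m|t|}e^{-m|t-x|},
\]
valid for $m\geq k$. Using the elementary inequality $|t|+|t-x|\geq|x|$, I split the exponent as $(m-K)(|t|+|t-x|)+K(|t|+|t-x|)$ with $K$ large enough that $e^{-K|t-x|}e^{-(K-2\pi k\sqrt{n})|t|}$ remains in $L^1_t$ uniformly in $x$, and $m-K\geq k$ to absorb the factor $e^{k|x|}$ from the norm $\rho_k$, while $e^{|{\rm Im}\,z||t|\cdot 2\pi}\leq e^{2\pi k\sqrt{n}|t|}$ on $\Pi_k$ is swallowed by $e^{-K|t|}$. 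Choosing $m$ sufficiently large yields $\rho_k(V_\psi f)\leq C\,\nu_m(f)\nu_m(\psi)$, and the continuity follows.

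For part $(ii)$, the goal is an estimate $\nu_k(V_\psi^*\Phi)\leq C\,\rho_{k'}(\Phi)\,\nu_m(\psi)$. The decisive tool is the freedom afforded by \eqref{adjoint} to choose the contour shift $\eta\in\mathbb{R}^n$ freely for each fixed $t$. I would select $\eta=\eta(t)$ with $\eta_j(t)=N\,\mathrm{sgn}(t_j)$ for a parameter $N$ to be fixed later, so that $\eta(t)\cdot t = N|t|_1\geq N|t|$ and consequently $|e^{2\pi i(\xi+i\eta)\cdot t}|\leq e^{-2\pi N|t|}$. Differentiating $V_\psi^*\Phi$ in $t$ by Leibniz produces terms of the form $(2\pi i z)^{\alpha-\beta}\psi^{(\beta)}(t-x)$, so the factor $(1+|z|^2)^{-k'/2}$ from $\rho_{k'}(\Phi)$ must dominate $|z|^{|\alpha|}$; it suffices to take $k'\geq k+n+1$, which also ensures $(1+|z|^2)^{-k'/2}\in L^1_\xi$ uniformly. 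The factor $e^{-k'|x|}$ from $\rho_{k'}(\Phi)$ combined with $e^{-m|t-x|}$ from $\nu_m(\psi)$ gives an $L^1_x$ integrand uniformly in $t$. Finally, choosing $N$ with $2\pi N\geq k$ (and $k'\geq N$) makes $e^{-2\pi N|t|}$ dominate the outer factor $e^{k|t|}$ present in $\nu_k$, producing the desired estimate.

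The main obstacle, and the only step that really needs care, is the simultaneous bookkeeping in part $(ii)$: the choice of $\eta(t)$ must be compatible with the exponential gain in $|t|$, with the requirement that $|\eta|\leq k'$ so that $(x,\xi+i\eta(t))\in\mathbb{R}^n\times\Pi_{k'}$ lies in the domain where $\rho_{k'}(\Phi)$ controls $\Phi$, and with the polynomial powers of $z$ generated by differentiating $e^{2\pi i z\cdot t}$. All three constraints can be met simultaneously by taking $k'$ and $m$ sufficiently large relative to $k$; once this is arranged, the remaining computation is routine and gives the continuity of $V^*$ by standard convergence-of-integrals arguments.
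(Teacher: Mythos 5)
Your proposal is correct, and the two halves compare differently with the paper. For part $(i)$ you argue exactly as the paper does: the paper applies $(1-\Delta_{t})^{k/2}$ under the integral to absorb the factor $(1+|z|^{2})^{k/2}$, which is the same integration-by-parts device as your identity for $(2\pi i z)^{\beta}\partial_{x}^{\alpha}V_{\psi}f$, and then it beats $e^{k|x|}$ and the growth $e^{2\pi\eta\cdot t}$ (with $|\eta|\leq k\sqrt{n}$ on $\Pi_{k}$) by the exponential decay encoded in the $\nu$-norms, arriving at $\rho_{k}(V_{\psi}\varphi)\leq C_{k}\nu_{8k}(\varphi)\nu_{k}(\psi)$; your "choose $m$ large" version gives the same kind of estimate. (One small slip: since $|\alpha+\beta_{2}|$ can be as large as $2k$, the pointwise bound on $\psi^{(\alpha+\beta_{2})}$ needs $m\geq 2k$, not $m\geq k$; this is harmless because you take $m$ large anyway.) For part $(ii)$ your route is genuinely different. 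The paper stays on the real contour ($\eta=0$ in \eqref{adjoint}): it writes $e^{k|t|}\leq e^{k|x|}e^{k|t-x|}$, controls $e^{k|t-x|}|\psi^{(\beta)}(t-x)|$ by $\nu_{k}(\psi)$, and lets the super-exponential decay of $\Phi$ in $x$ (through $\rho_{k+n+1}$) absorb $e^{k|x|}$ and supply the $\xi$-integrability, giving $\nu_{k}(V^{\ast}_{\psi}\Phi)\leq A_{k,n}\nu_{k}(\psi)\rho_{k+n+1}(\Phi)$ with no contour shift at all. You instead exploit the Cauchy-theorem freedom in \eqref{adjoint}, taking $\eta_{j}=N\operatorname{sgn}(t_{j})$ with $k/(2\pi)\leq N\leq k'$ so that the kernel itself produces $e^{-2\pi N|t|}$; this is valid provided you phrase the one delicate step correctly — differentiate under the integral for each \emph{fixed} $\eta$ (legitimate by dominated convergence) and only then, for each fixed $t$, evaluate the resulting identity at $\eta=\eta(t)$ — and provided $|\eta(t)|$ stays in $[-k',k']^{n}$ so that $\rho_{k'}(\Phi)$ controls $\Phi(x,\xi+i\eta)$, which your constraints guarantee. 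What each approach buys: the paper's argument is shorter and uses only real-frequency data, at the price of consuming the window's exponential decay of order $k$; your contour shift extracts the $e^{k|t|}$ gain entirely from the kernel, so only a mild norm of $\psi$ is needed (potentially useful if one wants rougher windows), at the cost of a somewhat larger index $k'$ and the extra bookkeeping you correctly identify as the main obstacle.
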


\begin{proof} For part $(i)$, let $\varphi,\psi\in \mathcal{K}_{1}(\mathbb{R}^{n})$. Let $k$ be an even integer. If $(x,z)\in \mathbb{R}^{n}\times\Pi_{k}$ and $|\alpha|\leq k$, then

\begin{align*}
&e^{k|x|}(1+|z|^{2}) ^{k/2}\left|\frac{\partial^{\alpha}}{\partial x^{\alpha}}V_{\psi}\varphi (x, z)\right|
\\
&
\leq (1+nk^{2})^{k/2}e^{k|x|} \left|\int_{\mathbb{R}^{n}} (1-\Delta_{t})^{k/2}(\varphi(t)\overline{\psi^{(\alpha)}(t-x)}e^{2\pi \eta\cdot t}) dt\right|\\
&
\leq \tilde{C}_{k}\sum_{|\beta_{1}|+|\beta_{2}|\leq k} e^{k|x|}\int_{\mathbb R^n} \left|\varphi^{(\beta_{1})}(t)\overline{\psi^{(\alpha+\beta_{2})}(t-x)}\right |e^{2 \pi k|t|}dt,
\end{align*}
which shows that $\rho_{k}(V_{\psi}\varphi)\leq C_{k} \nu_{8k}(\varphi)\nu_{k}(\psi)$. For $(ii)$, if $\Phi\in \mathcal K_{1}({\mathbb R^n})\widehat{\otimes} \mathcal U(\mathbb C^n)$, $\psi\in \mathcal{K}_{1}(\mathbb{R}^{n})$, and $|\alpha|\leq k$, we obtain
\begin{align*}
e^{k|t|}\left|\frac{\partial^{\alpha}}{\partial t^{\alpha}}V_{\psi}^{*}\Phi(t)\right|&\leq (2\pi)^{|\alpha|}\sum_{\beta\leq \alpha} \binom{\alpha}{\beta}e^{k|t|}\iint_{\mathbb{R}^{2n}}|\xi|^{k}|\Phi(x, \xi)||\psi^{(\beta)}(t-x)|dxd\xi\\
&
\leq (4\pi)^{|\alpha|}\nu_{k}(\psi) \iint_{\mathbb{R}^{2n}}|\xi|^{k}e^{k|x|}|\Phi(x, \xi)|dxd\xi
\\
&
\leq A_{k,n}\nu_{k}(\psi)\rho_{k+n+1}(\Phi);
\end{align*}
hence $\rho_{k}(V^{\ast}_{\psi}\Phi)\leq A_{k,n}\nu_{k}(\psi)\rho_{k+n+1}(\Phi)$.

\end{proof}

Observe that if the window $\psi\in\mathcal{K}_{1}(\mathbb{R}^{n})\setminus\{0\}$ and  $\gamma\in \mathcal{K}_{1}(\mathbb{R}^{n})$ is a synthesis window, the reconstruction formula (\ref{inverzna}) reads as:
\begin{equation}
\label{reconstructiontest}
\frac{1}{( \gamma
,\psi )_{L^{2}} } V_{\gamma}^* V_\psi={\rm id}_{\mathcal K_{1}({\mathbb R^n})}.
\end{equation}

We now study the STFT on $\mathcal K_1'(\mathbb R^n)$. Notice that the modulation operators $M_{z}$ operate continuously on $\mathcal{K}_{1}(\mathbb{R}^{n})$ even when $z\in\mathbb{C}^{n}$. Thus, if $f\in\mathcal{K}'_{1}(\mathbb{R}^{n})$ and $\psi\in\mathcal{K}_{1}(\mathbb{R}^{n})$ then $V_{\psi}f$, defined by the dual pairing in (\ref{defSTFT}), also extends in the second variable as an entire function $V_{\psi}f(x,z)$ in $z\in\mathbb{C}^{n}$. Furthermore, it is clear that $V_{\psi}f(x,z)$ is $C^{\infty}$ in $x\in\mathbb{R}^{n}$. We begin with a lemma.

\begin{lemma} \label{lemmaExchange}Let $\psi\in\mathcal{K}_{1}(\mathbb{R}^{n})$.
\begin{enumerate}
\item [$(a)$] Let $B'\subset \mathcal{K}'_{1}(\mathbb{R}^{n})$ be a bounded set. There is $k=k_{B'}\in\mathbb{N}_{0}$ such that
\begin{equation}\label{kprime}\sup_{f\in B',\:(x,z)\in\mathbb{R}^{n}\times\Pi_{\lambda}}e^{-k|x|- 2\pi x\cdot \Im m\: z}(1+|z|)^{-k}|V_\psi f(x,z)|<\infty,  \  \  \ \forall{\lambda}\geq 0.
\end{equation}
\item [$(b)$] For every $f\in\mathcal{K}'_{1}(\mathbb{R}^{n})$ and $\Phi\in \mathcal K_{1}({\mathbb R^n})\widehat{\otimes} \mathcal U(\mathbb C^n)$,

\begin{equation}\label{def11}\langle V_\psi{f}, \Phi \rangle =\left\langle f,
\overline{V_{\psi}^*\overline{\Phi}}\right\rangle.\end{equation}
\end{enumerate}
\end{lemma}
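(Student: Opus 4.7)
For part $(a)$, I will exploit the equicontinuity that bounded sets enjoy in the strong dual of a Fr\'echet space. By the Banach--Steinhaus theorem applied to $B'\subset \mathcal{K}_{1}'(\mathbb{R}^{n})$, there exist $k\in\mathbb{N}_{0}$ and $C>0$ with $|\langle f,\varphi\rangle|\leq C\nu_{k}(\varphi)$ for every $f\in B'$ and $\varphi\in \mathcal{K}_{1}(\mathbb{R}^{n})$. Writing $V_{\psi}f(x,z)=\langle f,\varphi_{x,z}\rangle$ with $\varphi_{x,z}(t):=e^{-2\pi i z\cdot t}\overline{\psi(t-x)}$, the task reduces to proving the uniform bound $\nu_{k}(\varphi_{x,z})\lesssim (1+|z|)^{k}e^{k|x|+2\pi x\cdot \Im m\,z}$ on $\mathbb{R}^{n}\times\Pi_{\lambda}$, with the implicit constant depending only on $k$, $\lambda$ and $\psi$.

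This bound follows by Leibniz's rule: each $t$-derivative of $e^{-2\pi i z\cdot t}$ brings down a factor bounded by $2\pi|z|$, giving the polynomial prefactor $(1+|z|)^{k}$, while the remaining derivatives act on $\overline{\psi(t-x)}$. Factoring $e^{k|t|+2\pi\Im m\,z\cdot t}=\bigl(e^{k|x|+2\pi x\cdot \Im m\,z}\bigr)\bigl(e^{k|t-x|+2\pi \Im m\,z\cdot(t-x)}\bigr)$ by the triangle inequality, and using that $|\Im m\,z|\leq \sqrt{n}\,\lambda$ on $\Pi_{\lambda}$, the residual factor $e^{(k+2\pi|\Im m\,z|)|t-x|}|\psi^{(\beta)}(t-x)|$ is uniformly bounded by $\nu_{k'}(\psi)$ for a sufficiently large integer $k'$ depending on $\lambda$. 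This yields the claimed estimate.

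For part $(b)$, my plan is to verify (\ref{def11}) first on elementary tensors $\Phi=\varphi\otimes\phi$ with $\varphi\in\mathcal{K}_{1}(\mathbb{R}^{n})$ and $\phi\in\mathcal{U}(\mathbb{C}^{n})$, and then extend to all $\Phi\in\mathcal{K}_{1}(\mathbb{R}^{n})\widehat{\otimes}\mathcal{U}(\mathbb{C}^{n})$ by density and continuity. For an elementary tensor I would interpret the left-hand side as $\iint_{\mathbb{R}^{2n}}V_{\psi}f(x,\xi)\varphi(x)\phi(\xi)\,dx\,d\xi$, which converges absolutely thanks to part $(a)$ with $\eta=0$ and the rapid decay of $\varphi$ and $\phi$. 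Substituting $V_{\psi}f(x,\xi)=\langle f,\overline{M_{\xi}T_{x}\psi}\rangle$ and swapping the pairing with the integration should give
\[
\langle V_{\psi}f,\Phi\rangle=\Big\langle f,\iint_{\mathbb{R}^{2n}}\varphi(x)\phi(\xi)\overline{M_{\xi}T_{x}\psi}\,dx\,d\xi\Big\rangle=\bigl\langle f,\overline{V_{\psi}^{\ast}\overline{\Phi}}\bigr\rangle,
\]
where the last step is immediate from (\ref{adjoint}).

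The main technical obstacle is rigorously justifying the interchange of the dual pairing with the double integral. To handle it, I would realize the double integral as a $\mathcal{K}_{1}(\mathbb{R}^{n})$-valued Bochner (equivalently Riemann) integral: convergence of Riemann sums in every seminorm $\nu_{m}$ rests on the elementary bound $\nu_{m}(\overline{M_{\xi}T_{x}\psi})\leq C_{m}(1+|\xi|)^{m}e^{m|x|}\nu_{m}(\psi)$, which is absolutely dominated by the rapid decay of $\varphi\in\mathcal{K}_{1}$ in $x$ and of $\phi\in\mathcal{U}$ in $\xi$. Continuity of $f$ then permits the swap and yields (\ref{def11}) on elementary tensors. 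Finally, the identity extends to all of $\mathcal{K}_{1}(\mathbb{R}^{n})\widehat{\otimes}\mathcal{U}(\mathbb{C}^{n})$ because the left-hand side depends continuously on $\Phi$ by part $(a)$ while the right-hand side does so by Proposition \ref{neprekinatSTFT}$(ii)$.
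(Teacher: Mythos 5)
Your proposal is correct and follows essentially the same route as the paper: part $(a)$ via Banach--Steinhaus equicontinuity plus a Leibniz estimate on $\nu_{k}\bigl(e^{-2\pi i z\cdot t}\overline{\psi(t-x)}\bigr)$ with the exponential factor split at $x$, and part $(b)$ via justifying the exchange of the dual pairing with the double integral through convergence of the corresponding $\mathcal{K}_{1}(\mathbb{R}^{n})$-valued Riemann sums. The only cosmetic difference is that in $(b)$ you first treat elementary tensors and then extend by density using the continuity of both sides, whereas the paper runs the Riemann-sum argument directly for a general $\Phi\in\mathcal{K}_{1}(\mathbb{R}^{n})\widehat{\otimes}\mathcal{U}(\mathbb{C}^{n})$; both are fine.
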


\begin{proof} \emph{Part $(a)$}. By the Banach-Steinhaus theorem, $B'$ is equicontinuous, so that there are $C>0$ and $k\in\mathbb{N}_{0}$ such that $|\langle f,\varphi \rangle|\leq C\nu_{k}(\varphi)$, $\forall f\in B', \forall\varphi\in \mathcal{K}_{1}(\mathbb{R}^{n})$. Hence, for all $f \in  B'$ ($z=\xi+i\eta$),
\begin{align*}
\left|V_{\psi}f(x, z)\right|&\leq C\sup_{t \in \mathbb R ^n, |\alpha|\leq k}e^{k|t|}\left|\frac{\partial^{\alpha}}{\partial t^{\alpha}}\left(e^{-2 \pi i z\cdot  t}\overline{\psi(t-x)}\right)\right|
\\
&
\leq (2\pi)^{k} C (1+|z|^{2})^{k/2}\sup_{t \in \mathbb R ^n,
|\alpha|\leq k}e^{k|t|+2\pi\eta\cdot t}\sum_{\beta\leq \alpha}\binom{\alpha}{\beta}\left|\psi ^{(\beta)}(t-x)\right|
\\
&
\leq
(4\pi)^{k}C (1+|z|^{2})^{k/2} e^{k|x|+2\pi\eta \cdot x}\nu_{k+1+\left\lfloor 2\pi|\eta|\right\rfloor}(\psi),
\end{align*}
where $\left\lfloor 2\pi |\eta|\right\rfloor$ stands for the integral part of $2\pi |\eta|$.

\emph{Part $(b)$}. We first remark that the left hand side of (\ref{def11}) is well defined because of part $(a)$. To show (\ref{def11}), notice that  the integral in (\ref{adjoint}), with $\eta=0$, can be approximated by a sequence of convergent Riemann sums in the topology of $\mathcal{K}_{1}(\mathbb{R}^{n})$; this justifies the exchange of integral and dual pairing in
$$
 \left\langle f(t),\iint_{\mathbb R ^{2n}}\Phi(x, \xi)e^{-2 \pi i \xi\cdot t}\overline{\psi(t-x)} dxd\xi\right\rangle_{t}= \iint_{\mathbb R ^{2n}}\Phi(x, \xi) \langle f, \overline{M_{\xi}T_x\psi} \rangle dxd\xi,
$$
which is the same as (\ref{def11}).
\end{proof}

In particular, if $B'$ is a singleton, part (a) of Lemma \ref{lemmaExchange} gives the growth order of the function $V_{\psi}f$ on every set $\mathbb{R}^{n}\times \Pi_{\lambda}$.

Let us define the adjoint STFT on $\in\mathcal K_{1}'({\mathbb R^n})\widehat{\otimes} \mathcal U'(\mathbb C^n)$.

\begin{definition} Let $\psi\in\mathcal K_{1}({\mathbb R^n})$. The adjoint STFT $V^*_{\psi} $ of $F\in\mathcal K_{1}'({\mathbb R^n})\widehat{\otimes} \mathcal U'(\mathbb C^n)$
is the distribution $V_{\psi}^*F\in \mathcal{K}'_{1}(\mathbb{R}^{n})$ whose action on test functions is given by
\begin{equation}\label{def1}\langle V_{\psi}^*F, \varphi\rangle:=\left\langle F, \overline{V_{\psi}\overline{\varphi}}\right\rangle, \ \ \ \varphi \in \mathcal K_{1}({\mathbb R^n}).\end{equation} \end{definition}

The next theorem summarizes our results.

\begin{theorem}
\label{thSTFT} The two STFT mappings
\begin{enumerate}
\item[(i)] $V:\mathcal K'_{1}({\mathbb R}^n)\times \mathcal K_{1}({\mathbb R}^n)\to \mathcal
K'_{1}({\mathbb R}^n)\widehat{\otimes} \mathcal U'(\mathbb C ^n)$
\item [(ii)]$V^{*}:(\mathcal K'_{1}({\mathbb R^n})\widehat{\otimes} \mathcal U'(\mathbb C^n))\times\mathcal K_{1}(\mathbb R^n)\to \mathcal K'_{1}({\mathbb R^n})$
\end{enumerate}
 are hypocontinuous. Let $\psi\in \mathcal{K}_1(\mathbb{R}^{n})\setminus\{0\}$ and let $\gamma\in \mathcal{K}_1(\mathbb{R}^{n})$ be a synthesis window for it. The following inversion and desingularization formulas hold:
\begin{equation}
\label{reconstructiondistributions}
\frac{1}{( \gamma
,\psi )_{L^{2}} } V_{\gamma}^* V_\psi={\rm id}_{\mathcal K'_{1}({\mathbb R^n})},
\end{equation}
and, for all $f \in \mathcal{K}'_1(\mathbb{R}^{n})$, $\varphi\in\mathcal{K}_1(\mathbb{R}^{n}),$ and $\eta\in\mathbb{R}^{n}$,
\begin{equation}
\label{desingular}
\langle f, \varphi\rangle=\frac{1}{( \gamma
,\psi )_{L^{2}} }\iint_{\mathbb{R}^{2n}}V_{\psi}f(x,\xi+i\eta)V_{\overline{\gamma}}\varphi (x,-\xi-i\eta)  dxd\xi.
\end{equation}
\end{theorem}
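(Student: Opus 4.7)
My plan is to deduce (i) and (ii) from the test function results of Proposition \ref{neprekinatSTFT} via the adjoint identities (\ref{def11}) and (\ref{def1}), to derive the inversion formula (\ref{reconstructiondistributions}) by duality from its test function counterpart (\ref{reconstructiontest}), and finally to unfold the pairing as a double integral and shift the contour to obtain (\ref{desingular}).

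For the hypocontinuity of (i), I fix a bounded set $B'\subset\mathcal{K}'_1(\mathbb{R}^n)$. By Banach--Steinhaus it is equicontinuous, so there exist $C>0$ and $k\in\mathbb{N}_0$ with $|\langle f,\varphi\rangle|\leq C\nu_k(\varphi)$ for all $f\in B'$. Combining (\ref{def11}) with the quantitative bound $\nu_k(V^*_\psi\overline{\Phi})\leq A_{k,n}\nu_k(\psi)\rho_{k+n+1}(\Phi)$ extracted from the proof of Proposition \ref{neprekinatSTFT}(ii) gives $|\langle V_\psi f,\Phi\rangle|\leq CA_{k,n}\nu_k(\psi)\rho_{k+n+1}(\Phi)$ uniformly in $f\in B'$, which is the hypocontinuity in the second slot; the symmetric statement, with a bounded set of windows held fixed, is identical. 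The hypocontinuity of (ii) follows in the same way, replacing (\ref{def11}) by (\ref{def1}) and using the bound $\rho_k(V_\psi\varphi)\leq C_k\nu_{8k}(\varphi)\nu_k(\psi)$ obtained in the proof of Proposition \ref{neprekinatSTFT}(i).

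For the inversion formula, I pair the left-hand side of (\ref{reconstructiondistributions}) against an arbitrary $\varphi\in\mathcal{K}_1(\mathbb{R}^n)$, applying (\ref{def1}) and then (\ref{def11}):
$$\langle V^*_\gamma V_\psi f,\varphi\rangle = \langle V_\psi f,\overline{V_\gamma\overline{\varphi}}\rangle = \langle f,\overline{V^*_\psi V_\gamma\overline{\varphi}}\rangle.$$
The test function reconstruction (\ref{reconstructiontest}), applied to $\overline{\varphi}$ with $\gamma$ in the role of analysis window and $\psi$ in the role of synthesis window, reads $V^*_\psi V_\gamma\overline{\varphi}=(\psi,\gamma)_{L^2}\overline{\varphi}$; taking conjugates and dividing by $(\gamma,\psi)_{L^2}=\overline{(\psi,\gamma)_{L^2}}$ then recovers $\langle f,\varphi\rangle$, establishing (\ref{reconstructiondistributions}).

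For the desingularization formula, a direct computation from the definitions yields $\overline{V_\gamma\overline{\varphi}}(x,z)=V_{\overline{\gamma}}\varphi(x,-z)$. Since $V_\psi f$ is smooth in $x$, entire in $z$, and controlled by Lemma \ref{lemmaExchange}(a), while $V_{\overline{\gamma}}\varphi\in\mathcal{K}_1(\mathbb{R}^n)\widehat{\otimes}\mathcal{U}(\mathbb{C}^n)$ by Proposition \ref{neprekinatSTFT}(i), the pairing $\langle V_\psi f,\overline{V_\gamma\overline{\varphi}}\rangle$ is represented by the absolutely convergent double integral
$$\iint_{\mathbb{R}^{2n}} V_\psi f(x,\xi+i\eta)\,V_{\overline{\gamma}}\varphi(x,-\xi-i\eta)\,dx\,d\xi,$$
whose value is independent of $\eta\in\mathbb{R}^n$ by Cauchy's theorem; dividing by $(\gamma,\psi)_{L^2}$ and invoking (\ref{reconstructiondistributions}) yields (\ref{desingular}). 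The most delicate point is the justification of Fubini together with the contour shift: the factors $e^{k|x|+2\pi x\cdot\eta}$ and $(1+|z|)^k$ in the growth bound on $V_\psi f$ must be absorbed by the exponential decay in $x$ and the faster-than-polynomial decay in $\xi$ of $V_{\overline{\gamma}}\varphi$ on every horizontal band, both of which are furnished by the $\rho_k$-seminorms.
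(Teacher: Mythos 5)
Your proof is correct and follows essentially the same route as the paper: hypocontinuity via duality from Proposition \ref{neprekinatSTFT} together with (\ref{def11}) and (\ref{def1}), the chain $\langle V^{\ast}_{\gamma}V_{\psi}f,\varphi\rangle=\langle V_{\psi}f,\overline{V_{\gamma}\overline{\varphi}}\rangle=\langle f,\overline{V^{\ast}_{\psi}V_{\gamma}\overline{\varphi}}\rangle=(\gamma,\psi)_{L^{2}}\langle f,\varphi\rangle$ based on (\ref{reconstructiontest}), and Cauchy's theorem to pass from $\eta=0$ to arbitrary $\eta$ in (\ref{desingular}). You merely spell out details the paper leaves to the reader (the explicit equicontinuity bounds, the identity $\overline{V_{\gamma}\overline{\varphi}}(x,z)=V_{\overline{\gamma}}\varphi(x,-z)$, and the absolute convergence justifying the integral representation), all of which are accurate.
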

\begin{proof} That $V$ and $V^{\ast}$ are hypocontinuous on these spaces follows from Proposition \ref{neprekinatSTFT} and the formula (\ref{def11}) from Lemma \ref{lemmaExchange}; we leave the details to the reader. By the Cauchy theorem, it is enough to show (\ref{desingular}) for $\eta=0$. Using (\ref{def1}), (\ref{def11}), and (\ref{reconstructiontest}), we have $\langle V^{\ast}_{\gamma}V_{\psi}f,\varphi\rangle=\langle V_{\psi}f,\overline{V_{\gamma}\overline{\varphi}}\rangle=\langle f,\overline{V^{\ast}_{\psi}V_{\gamma}\overline{\varphi}}\rangle= ( \gamma
,\psi )_{L^{2}} \langle f, \varphi\rangle,$ namely, (\ref{reconstructiondistributions}) and (\ref{desingular}).\end{proof}

The next corollary gives the converse to part $(a)$ of Lemma \ref{lemmaExchange} under a weaker inequality than (\ref{kprime}), namely, a characterization of bounded sets in  $\mathcal K_{1}'(\mathbb R^n)$ in terms of the STFT.

\begin{corollary} \label{boundedsets}Let $B'\subset
\mathcal K_{1}'(\mathbb R^n)$ and $\psi\in\mathcal K_{1}(\mathbb R^n)\setminus\{0\}$. If there are $\eta\in \mathbb{R}^{n}$ and $k\in\mathbb{N}_{0}$ such that
\begin{equation}\label{bounded}\sup_{f\in B',(x,\xi)\in\mathbb {R}^{2n}}e^{-k|x|}(1+|\xi|)^{-k}|V_\psi f(x,\xi+i\eta)|<\infty,
\end{equation}
then the set $B'$ is bounded in $\mathcal K_{1}'(\mathbb R^n)$. Conversely, if $B'$ is bounded in $\mathcal K_{1}'(\mathbb R^n)$ there is $k\in\mathbb{N}_{0}$ such that (\ref{kprime}) holds.  \end{corollary}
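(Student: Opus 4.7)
The second half of the statement is already established: if $B'$ is bounded in $\mathcal K_{1}'(\mathbb R^n)$ then Lemma \ref{lemmaExchange}(a) yields the stronger estimate (\ref{kprime}), which trivially implies (\ref{bounded}) at any fixed $\eta$. So the real content is the forward direction, and my plan is to read it off the desingularization formula (\ref{desingular}) of Theorem \ref{thSTFT}, choosing the contour parameter equal to the $\eta$ supplied by the hypothesis.

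Concretely, I would fix a synthesis window $\gamma\in \mathcal K_1(\mathbb R^n)$ for $\psi$ (e.g.\ $\gamma=\psi$, so $(\gamma,\psi)_{L^2}=\|\psi\|_2^2>0$) and write, for $f\in B'$ and $\varphi\in\mathcal K_1(\mathbb R^n)$,
\[
|\langle f,\varphi\rangle|\leq \frac{1}{|(\gamma,\psi)_{L^2}|}\iint_{\mathbb R^{2n}} |V_\psi f(x,\xi+i\eta)|\,|V_{\overline\gamma}\varphi(x,-\xi-i\eta)|\,dx\,d\xi.
\]
By the hypothesis (\ref{bounded}), the first factor is bounded by $M\,e^{k|x|}(1+|\xi|)^k$ uniformly in $f\in B'$. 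For the second factor, Proposition \ref{neprekinatSTFT}(i) tells us that $V_{\overline\gamma}\varphi\in \mathcal K_1(\mathbb R^n)\widehat\otimes \mathcal U(\mathbb C^n)$, with an estimate of the form $\rho_m(V_{\overline\gamma}\varphi)\leq C_m\,\nu_{8m}(\varphi)\,\nu_m(\overline\gamma)$. Choosing any integer $m\geq |\eta|$, the seminorm $\rho_m$ yields the pointwise bound
\[
|V_{\overline\gamma}\varphi(x,-\xi-i\eta)|\leq \rho_m(V_{\overline\gamma}\varphi)\,e^{-m|x|}(1+|\xi|^2+|\eta|^2)^{-m/2}.
\]

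Inserting these two bounds and picking $m$ large enough (say $m\geq \max(|\eta|,k+n+1)$) makes the integral $\iint e^{(k-m)|x|}(1+|\xi|)^{k-m}\,dx\,d\xi$ finite, say equal to $I_m$. Combining everything, there is a constant $C$ depending only on $\psi$, $\gamma$, $k$, $n$, $\eta$, $M$ such that
\[
|\langle f,\varphi\rangle|\leq C\,\nu_{8m}(\varphi)\qquad\text{for all }f\in B',\ \varphi\in\mathcal K_1(\mathbb R^n).
\]
This is equicontinuity of $B'$, and since $\mathcal K_1(\mathbb R^n)$ is Fr\'echet (hence barrelled), equicontinuity is equivalent to strong boundedness of $B'$ in $\mathcal K_1'(\mathbb R^n)$.

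The only genuinely delicate point is the simultaneous choice of $m$: we need $m\geq |\eta|$ so that the point $-\xi-i\eta$ lies in the tube $\Pi_m$ where the seminorm $\rho_m$ controls $V_{\overline\gamma}\varphi$, and at the same time $m>k+n$ so that the double integral converges absolutely. Both requirements are met by any sufficiently large $m$, so the argument goes through. Apart from this bookkeeping the proof is a direct consequence of the desingularization identity combined with the test-function continuity of the STFT.
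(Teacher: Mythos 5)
Your proof is correct and follows essentially the same route as the paper: the converse via Lemma \ref{lemmaExchange}(a), and the forward direction by inserting the hypothesis (\ref{bounded}) into the desingularization formula (\ref{desingular}) at the given $\eta$ and using that $V_{\overline\gamma}\varphi\in\mathcal K_{1}(\mathbb R^n)\widehat\otimes\mathcal U(\mathbb C^n)$, together with barrelledness/Banach--Steinhaus to pass from the uniform weak estimate to boundedness. The only difference is that you make explicit, via the seminorm $\rho_m$ with $m\geq\max(|\eta|,k+n+1)$, the convergence of the double integral that the paper leaves implicit.
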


\begin{proof}
In view of the Banach-Steinhaus theorem, we only need to show that $B'$ is weakly bounded. Let $\gamma$ be a synthesis window for $\psi$ and let $\varphi \in \mathcal K_{1}(\mathbb R ^n).$ Then, by the desingularization formula \eqref{desingular}, we have
\begin{equation*}\sup_{f\in B'}\left|\left\langle f, \varphi\right\rangle\right|
\leq \frac{C_{\eta}}{(\gamma,\psi )_{L^{2}}}\iint_{\mathbb
R^{2n}}e^{k|x|}(1+|\xi|)^k\left|V_{\overline{\gamma}}\varphi(x, -\xi-i\eta)\right|dxd\xi<\infty,
\end{equation*}
because $V_{\gamma}\varphi\in\mathcal K_{1}({\mathbb R^n})\widehat{\otimes} \mathcal U(\mathbb C^n)$. The converse was already shown in Lemma \ref{lemmaExchange}.
\end{proof}

\section{Characterizations of $\mathcal{B}'_{\omega}(\mathbb{R}^{n})$ and $\dot{\mathcal{B}}'_{\omega}(\mathbb{R}^{n})$}
\label{characterization omega bounded}
We now turn our attention to the characterization of the space of $\omega$-bounded distributions $\mathcal{B}'_{\omega}(\mathbb{R}^{n})$ and its subspace $\dot{\mathcal{B}}'_{\omega}(\mathbb{R}^{n})$. Recall that $\omega$ stands for an exponentially moderate weight, i.e., a positive and measurable function satisfying (\ref{weight omega}).
\begin{theorem}\label{omega bounded} Let $f\in\mathcal{K}'_{1}(\mathbb{R}^{n})$ and $\psi\in \mathcal{K}_{1}(\mathbb{R}^{n})\setminus\{0\}$.
\begin{itemize}
\item [$(i)$]The following statements are equivalent:
\begin{itemize}
\item [$(a)$] $f\in\mathcal{B}'_{\omega} (\mathbb{R}^{n}).$
\item [$(b)$] The set $\{T_{-h}f/\omega(h):\: h\in\mathbb{R}^{n}\}$ is bounded in $\mathcal{K}_{1}'(\mathbb{R}^{n})$.
\item [$(c)$] There is $s\in\mathbb{R}$ such that
\begin{equation}\label{inequality omega modulation}
\sup_{(x,\xi)\in\mathbb {R}^{2n}}(1+|\xi|)^{-s}\frac{|V_\psi f(x,\xi)|}{\omega(x)}<\infty.
\end{equation}
\end{itemize}
\item[$(ii)$] The next three conditions are equivalent:
\begin{itemize}
\item [$(a)'$] $f\in\dot{\mathcal{B}}'_{\omega} (\mathbb{R}^{n}).$
\item [$(b)'$] $\lim_{|h|\to\infty}T_{-h}f/\omega(h)=0$ in $\mathcal{K}_{1}'(\mathbb{R}^{n})$.
\item [$(c)'$] There is $s'\in\mathbb{R}$ such that
\begin{equation}\label{limit omega modulation}
\lim_{|(x,\xi)|\to\infty}(1+|\xi|)^{-s'}\frac{|V_\psi f(x,\xi)|}{\omega(x)}=0.
\end{equation}
\end{itemize}
\end{itemize}
\end{theorem}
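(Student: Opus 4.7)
I would prove the two cycles $(a)\Rightarrow(b)\Rightarrow(c)\Rightarrow(a)$ and $(a')\Rightarrow(b')\Rightarrow(c')\Rightarrow(a')$. A key identity, obtained by substituting $u=t-x$ in \eqref{defSTFT}, is
$$V_{\psi}f(x,\xi)/\omega(x)=e^{-2\pi i\xi\cdot x}\left\langle T_{-x}f/\omega(x),\overline{M_{\xi}\psi}\right\rangle,$$
which directly couples the weighted translates in $(b)/(b')$ to the STFT bounds in $(c)/(c')$. For $(a)\Rightarrow(b)$, the substitution $s=t-h$ and \eqref{weight omega} yield
$$\|T_{h}\varphi\|_{1,\omega,k}/\omega(h)\le A\sup_{|\alpha|\le k}\int_{\mathbb{R}^{n}}|\varphi^{(\alpha)}(s)|e^{a|s|}\,ds\le C\nu_{m}(\varphi)$$
uniformly in $h$ (for $m$ large enough), so the equicontinuity of $f\in\mathcal{B}'_{\omega}$ on $\mathcal{D}_{L^{1}_{\omega}}$ transfers to the family $\{T_{-h}f/\omega(h)\}\subset\mathcal{K}'_{1}$. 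The step $(b)\Rightarrow(c)$ is immediate from Corollary \ref{boundedsets} applied to $B'=\{T_{-h}f/\omega(h)\}$, specialized to $(x,z)=(0,\xi)$ and combined with the identity above.

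For $(c)\Rightarrow(a)$, the desingularization formula \eqref{desingular} with $\eta=0$ gives, for $\varphi\in\mathcal{K}_{1}(\mathbb{R}^{n})$,
$$|\langle f,\varphi\rangle|\le\frac{C}{|(\gamma,\psi)_{L^{2}}|}\iint_{\mathbb{R}^{2n}}\omega(x)(1+|\xi|)^{s}\left|V_{\overline{\gamma}}\varphi(x,-\xi)\right|dxd\xi.$$
Integration by parts via $(1-\Delta_{t}/(4\pi^{2}))^{N}$ (as in the proof of Proposition \ref{neprekinatSTFT}(i)) supplies the factor $(1+|\xi|^{2})^{-N}$ needed to render the $\xi$-integral convergent; Leibniz expansion and Fubini with $u=t-x$, combined with \eqref{weight omega}, then yield $|\langle f,\varphi\rangle|\le C'\|\varphi\|_{1,\omega,k}$ for some $k$. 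Density of $\mathcal{K}_{1}(\mathbb{R}^{n})$ in $\mathcal{D}_{L^{1}_{\omega}}(\mathbb{R}^{n})$ extends $f$ to $\mathcal{B}'_{\omega}(\mathbb{R}^{n})$.

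For part (ii), $(a')\Rightarrow(b')$ is established first for $\phi\in\mathcal{D}(\mathbb{R}^{n})$: if $\operatorname{supp}\phi\subseteq\{|u|\le R\}$ and $\varphi\in\mathcal{K}_{1}$, then $|\langle T_{-h}\phi/\omega(h),\varphi\rangle|\le\|\phi\|_{L^{1}}\nu_{k}(\varphi)e^{kR}e^{-k|h|}/\omega(h)\to 0$ for $k>a$, since $1/\omega(h)\lesssim e^{a|h|}$ by \eqref{weight omega}. An $\varepsilon/2$-argument using the uniform bound on $\|T_{h}\varphi\|_{1,\omega,k}/\omega(h)$ from part (i) and the density of $\mathcal{D}$ in $\dot{\mathcal{B}}'_{\omega}$ transfers the convergence to $f$. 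For $(b')\Rightarrow(c')$, $\dot{\mathcal{B}}'_{\omega}\subset\mathcal{B}'_{\omega}$ and part (i) furnish the uniform bound $(1+|\xi|)^{-s}|V_{\psi}f(x,\xi)|/\omega(x)\le M$; setting $s':=s+1$, the extra factor $(1+|\xi|)^{-1}$ handles $|\xi|\to\infty$, while for $|x|\to\infty$ with $|\xi|\le R$ the set $\{\overline{M_{\xi}\psi}:|\xi|\le R\}$ is bounded in $\mathcal{K}_{1}$ (since $\nu_{k}(M_{\xi}\psi)\lesssim(1+|\xi|)^{k}\nu_{k}(\psi)$), so the strong-dual convergence $T_{-x}f/\omega(x)\to 0$ is uniform on this set.

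The main obstacle is $(c')\Rightarrow(a')$. Since $(c')$ implies $(c)$, part (i) already gives $f\in\mathcal{B}'_{\omega}$, so what remains is to approximate $f$ by $\mathcal{D}$-functions in $\mathcal{B}'_{\omega}$. Fix $\chi\in\mathcal{D}(\mathbb{R}^{2n})$ with $\chi(0,0)=1$, set $\chi_{R}(x,\xi):=\chi(x/R,\xi/R)$, and define
$$f_{R}:=\frac{1}{(\gamma,\psi)_{L^{2}}}V_{\gamma}^{*}(\chi_{R}V_{\psi}f).$$
Since $\chi_{R}V_{\psi}f$ has compact support and $\gamma\in\mathcal{K}_{1}$, a direct derivative estimate shows $f_{R}\in\mathcal{K}_{1}(\mathbb{R}^{n})$. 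Writing $f-f_{R}$ through the desingularization formula and repeating the analysis of $(c)\Rightarrow(a)$, but now using the improved bound $|V_{\psi}f(x,\xi)|/\omega(x)\le\varepsilon_{R}(1+|\xi|)^{s'}$ outside $\operatorname{supp}\chi_{R}$ (where $\varepsilon_{R}\to 0$ by $(c')$), yields $\|f-f_{R}\|_{\mathcal{B}'_{\omega}}=O(\varepsilon_{R})\to 0$. Finally, $\mathcal{K}_{1}\subset\dot{\mathcal{B}}'_{\omega}$: for $g\in\mathcal{K}_{1}$ one has $|g(t)|/\omega(t)\lesssim e^{-(k-a)|t|}\to 0$ for $k>a$, so smooth compactly-supported truncations of $g$ converge to $g$ in $\mathcal{B}'_{\omega}$ by dominated convergence; thus $f\in\dot{\mathcal{B}}'_{\omega}(\mathbb{R}^{n})$.
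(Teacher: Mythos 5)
Your proof is correct and, for most of the implications, follows the same route as the paper: $(a)\Rightarrow(b)$ via the uniform bound on $\|T_{h}\varphi\|_{1,\omega,k}/\omega(h)$ (the paper phrases this dually, testing $f\ast\varphi$ against $\{\phi\in\mathcal{D}:\int|\phi|\omega\le1\}$, but it is the same estimate), $(b)\Rightarrow(c)$ via Lemma \ref{lemmaExchange}/Corollary \ref{boundedsets} and the translation identity for the STFT, $(c)\Rightarrow(a)$ via the desingularization formula with the $(1-\Delta)^{N}$ integration by parts, and $(a')\Rightarrow(b')$, $(b')\Rightarrow(c')$ exactly as in the paper. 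The genuine difference is in $(c')\Rightarrow(a')$: the paper approximates $V_{\psi}f$ in $L^{\infty,\infty}_{1/\omega_{s'}}$ by elements of $\mathcal{K}_{1}(\mathbb{R}^{n})\widehat{\otimes}\mathcal{U}(\mathbb{C}^{n})$, using the density of $\mathcal{K}_1\otimes\mathcal{S}$ in the relevant closure and of $\mathcal{U}(\mathbb{C}^{n})$ in $\mathcal{S}(\mathbb{R}^{n})$, and then sets $\phi_j=V^{\ast}_{\gamma}\Phi_j$; you instead multiply $V_{\psi}f$ by a concrete compactly supported cutoff $\chi_R$ and apply $V^{\ast}_{\gamma}$ as an absolutely convergent integral, which is more self-contained (no appeal to the $\mathcal{U}\hookrightarrow\mathcal{S}$ density) and also makes explicit the inclusion $\mathcal{K}_1\subset\dot{\mathcal{B}}'_{\omega}$, which the paper only indicates by "$\mathcal{D}\hookrightarrow\mathcal{K}_1$". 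Two small repairs are needed in your write-up, both one-liners: first, with only $\chi(0,0)=1$ the error $(1-\chi_R)V_{\psi}f$ is not supported "outside $\operatorname{supp}\chi_R$"; either take $\chi\equiv1$ on a neighborhood of the origin, or split the estimate into the region $|(x,\xi)|\le\rho(R)$ (where $1-\chi_R$ is uniformly small if $\rho(R)/R\to0$) and its complement (where $(c')$ gives smallness), using the global bound $(c)$ on the first region. Second, in $(b')\Rightarrow(c')$ you obtain the uniform bound \eqref{inequality omega modulation} by invoking $\dot{\mathcal{B}}'_{\omega}\subset\mathcal{B}'_{\omega}$, i.e.\ by using $(a')$, which is not the hypothesis of that implication and would leave the cycle incomplete; instead note that $(b')$ trivially implies $(b)$ (the family converging to $0$ is bounded, since $\omega$ is bounded below on compacts by \eqref{weight omega}), and then apply part $(i)$ — this is in effect how the paper handles it with its opening remark in part $(ii)$.
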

\begin{remark} \label{rk4.1} Theorem \ref{omega bounded} remains valid if we replace $\mathcal{K}'_{1}(\mathbb{R}^{n})$ and $\mathcal{K}_{1}(\mathbb{R}^{n})$ by $\mathcal{D}'(\mathbb{R}^{n})$ and $\mathcal{D}(\mathbb{R}^{n})$ everywhere in the statement. Schwartz has shown in \cite[p. 204]{Schwartz} the equivalence between $(a)$ and $(b)$ for $\omega=1$ by using a much more complicated method involving a parametrix technique.
\end{remark}

\begin{proof} \emph{Part $(i)$.} $(a)\Rightarrow(b)$. Let $f\in\mathcal{B}'_{\omega} (\mathbb{R}^{n})$, since $\mathcal{K}_{1}(\mathbb{R}^{n})$ is barreled, we only need to show that the $f\ast \varphi$ is bounded by $\omega$ for fixed $\varphi\in \mathcal{K}_{1}(\mathbb{R}^{n})$. Let $B:=\{\phi\in \mathcal{D}(\mathbb{R}^{n}):\: \int_{\mathbb{R}^{n}}|\phi(x)|\omega(x)dx\leq 1\}$. By the assumption (\ref{weight omega}),

$$\|\check{\varphi}\ast\phi\|_{1,\omega,k}\leq A\max_{|\alpha|\leq k}\int_{\mathbb{R}^{n}}|\varphi^{(\alpha)}(x)|e^{a|x|}dx, \  \  \ \forall k\in\mathbb{N}_{0},\:\forall \phi\in B, $$
namely, the set $\check{\varphi}\ast B$ is bounded in $\mathcal{D}_{L^{1}_{\omega}}(\mathbb{R}^{n})$. Consequently, $\sup_{\phi\in B}|\langle f\ast\varphi, \phi\rangle|=\sup_{\phi\in B}|\langle f,\check{\varphi} \ast \phi\rangle|<\infty$. Since $\mathcal{D}(\mathbb{R}^{n})$ is dense in $L^{1}_{\omega}(\mathbb{R}^{n})$, this implies that $f\ast \varphi \in (L^{1}_{\omega}(\mathbb{R}^{n}))'$, i.e., $\sup_{h\in\mathbb{R}^{n}} |(f\ast\varphi)(h)|/\omega(h)<\infty$, as claimed.

$(b)\Rightarrow(c)$. Notice that $(V_{\psi}T_{-h}f)(x,z)=e^{2\pi i z\cdot h}V_{\psi}f(x+h,z)$. Fix $\lambda\geq 0$. By Corollary \ref{boundedsets} (cf. (\ref{kprime})), there are $k\in\mathbb{N}_{0}$ and $C_{\lambda}>0$ such that, for all $x,h\in\mathbb{R}^{n}$ and $z\in \Pi_{\lambda}$,
$$
|e^{2\pi i z\cdot h}V_{\psi} f(x+h,z)|\leq C_{\lambda}\omega(h)(1+|z|)^{k}e^{(k+2\pi\lambda)|x|}.
$$
Taking $x=0$ and $\Im m \: z=0$, one gets (\ref{inequality omega modulation}).

$(c)\Rightarrow(a)$. Fix a synthesis window $\gamma\in \mathcal{K}_{1}(\mathbb{R}^{n})$. In view of (\ref{weight omega}), one has that if $j$ is any non-negative even integer and $\lambda\geq0$, then, for all $\varphi\in \mathcal{D}_{L_{\omega}^{1}}(\mathbb{R}^{n})$,
\begin{align*}
&\sup_{z\in\Pi_{\lambda}}(1+|z|^{2})^{j/2}\int_{\mathbb{R}^{n}}e^{-2\pi x\cdot \Im m\: z }\omega(x)|V_{\overline{\gamma}}\varphi(x,z)|dx
\\
&
\leq \tilde{C}_{j} \sum_{|\beta_{1}|+|\beta_2|\leq j} \iint_{\mathbb{R}^{2n}} \omega(x) |\varphi^{(\beta_{1})}(t)\gamma^{(\beta_{2})} (t-x)|e^{2\pi \lambda |t-x|} dtdx
\\
&
\leq A\tilde{C}_{j}\|\varphi\|_{1,\omega,j} \max_{|\beta|\leq j}\int_{\mathbb{R}^{n}}|\gamma^{(\beta)}(x)|e^{(2\pi\lambda+a)|x|}dx\leq C_{j,\lambda} \|\varphi\|_{1,\omega,j}.
\end{align*}
We may assume that $s$ is an even integer. By (\ref{inequality omega modulation}) and the previous estimate, we obtain, for every $\varphi\in \mathcal{K}_{1}(\mathbb{R}^{n})$,
\begin{align*}
|\langle f,\varphi\rangle|&\leq \frac{C}{( \psi,\gamma)_{L^{2}}}\iint_{\mathbb
R^{2n}}(1+|\xi|)^s\omega(x)\left|V_{\overline{\gamma}}\varphi(x, -\xi)\right|dxd\xi\leq C_{s} \|\varphi\|_{1,\omega,s+n+1},
\end{align*}
which yields $f\in\mathcal{B}'_{\omega}(\mathbb{R}^{n})$.

\emph{Part $(ii)$.} Any of the conditions implies that $f\in \mathcal{B}'_{\omega}(\mathbb{R}^{n})$. $(a)'\Rightarrow(b)'$. Fix $\varphi\in \mathcal{K}_{1}(\mathbb{R}^{n})$. Given fixed $\varepsilon>0$, we must show that $\limsup_{|h|\to\infty}|\langle T_{-h}f,\varphi\rangle|/\omega(h)\leq \varepsilon$. Notice  that $\{T_{h}\varphi/\omega(h):\: h\in\mathbb{R}^{n}\}$ is a bounded set in $\mathcal{D}_{L^{1}_{\omega}}(\mathbb{R}^{n})$. Since $f$ is in the closure of $\mathcal{D}(\mathbb{R}^{n})$ in $\mathcal{B}'_{\omega}(\mathbb{R}^{n})$, there is $\phi\in \mathcal{D}(\mathbb{R}^{n})$ such that $|\langle T_{-h}(f-\phi), \varphi|\leq \varepsilon \omega(h)$ for every $h\in\mathbb{R}^{n}$. Consequently,
$$
\limsup_{|h|\to\infty}\frac{|\langle T_{-h}f,\varphi\rangle|}{\omega(h)}\leq \varepsilon + \lim_{|h|\to\infty}\frac{1}{\omega(h)}\left|\int_{\mathbb{R}^{n}} \varphi(t-h)\phi(t)dt\right|\leq \varepsilon.
$$

$(b)'\Rightarrow(c)'$. If $\xi$ remains on a compact of $K\subset\mathbb{R}^{n}$, then $\{\overline{M_{\xi}\psi}: \: \xi\in K\}$ is compact in $\mathcal{K}_{1}(\mathbb{R}^{n})$, thus, by the Banach-Steinhaus theorem,
$$0=\lim_{|x|\to\infty} \frac{|\langle T_{-x}f,\overline{M_{\xi}\psi}\rangle|}{\omega(x)}= \lim_{|x|\to\infty} \frac{|V_{\psi}f(x,\xi)|}{\omega(x)} , \mbox{ uniformly in }\xi\in K.$$
There is $s$ such that (\ref{inequality omega modulation}) holds. Taking into account that the above limit holds for arbitrary $K$, we obtain that (\ref{limit omega modulation}) is satisfied for any $s'>s$.

$(c)'\Rightarrow(a)'$. We may assume that $s'$ is a non-negative even integer. Consider the weight ${\omega_{s'}(x,\xi)=\omega(x)(1+|\xi|)^{s'}}$. The limit relation (\ref{limit omega modulation}) implies that $V_{\psi}f$ is in the closure of $\mathcal{K}_{1}(\mathbb{R}^{n})\otimes \mathcal{S}(\mathbb{R}^{n})$ with respect to the norm $\|\: \:\|_{L^{\infty,\infty}_{1/\omega_{s'}}}$. Since we have the dense embedding $\mathcal{U}(\mathbb{C}^{n})\hookrightarrow\mathcal{S}(\mathbb{R}^{n})$, there is a sequence $\{\Phi_{j}\}_{j=1}^{\infty}\subset \mathcal{K}_{1}(\mathbb{R}^{n})\widehat{\otimes}\mathcal{U}(\mathbb{C}^{n})$ such that $\lim_{j\to\infty}\Phi_{j}=V_{\psi}f$ in $L^{\infty,\infty}_{1/\omega_{s'}}(\mathbb{R}^{2n})$. Let $\gamma\in\mathcal{K}_{1}(\mathbb{R}^{n})$ be a synthesis window and set $\phi_{j}=V^{\ast}_{\gamma}\Phi_{j}\in\mathcal{K}_{1}(\mathbb{R}^{n})$ (cf. Proposition \ref{neprekinatSTFT}). By the relations (\ref{desingular}) and (\ref{def1}), we have for any $\varphi\in \mathcal{K}_{1}(\mathbb{R}^{n})$,
$$
|\langle f-\phi_{j},\varphi\rangle|\leq \frac{C \|\varphi\|_{1,\omega,s+n+1}}{(\gamma,\psi )_{L^{2}}} \|V_{\psi}f-\Phi_{j}\|_{L^{\infty,\infty}_{1/\omega_{s'}}},
$$
where $C$ does not depend on $j$. Thus, $\phi_{j}\to f$ in $\mathcal{B}'_{\omega}(\mathbb{R}^{n})$, which in turn implies that $f\in\dot{\mathcal{B}}'_{\omega}(\mathbb{R}^{n})$ because $\mathcal{D}(\mathbb{R}^{n})\hookrightarrow \mathcal{K}_{1}(\mathbb{R}^{n})$.

\end{proof}

We immediately get the ensuing result, a corollary of Theorem \ref{omega bounded}.

\begin{corollary}
$
\mathcal{K}'_{1}(\mathbb{R}^{n})=\bigcup_{\omega} \mathcal{B}'_{\omega}(\mathbb{R}^{n})=\bigcup_{\omega}\dot{\mathcal{B}}'_{\omega}(\mathbb{R}^{n}).
$
In particular, $f\in\mathcal{D}'(\mathbb{R}^{n})$ belongs to $\mathcal{K}'_{1}(\mathbb{R}^{n})$ if and only if there is $s\in\mathbb{R}$ such that $\{e^{-s|h|}T_{-h}f: \:h\in\mathbb{R}^{n}\}$ is bounded in $\mathcal{D}'(\mathbb{R}^{n})$.
\end{corollary}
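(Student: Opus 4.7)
The plan is to sandwich $\mathcal{K}'_1(\mathbb{R}^n)$ between the two unions and then derive the characterization from Theorem \ref{omega bounded} together with its $\mathcal{D}'$-version in Remark \ref{rk4.1}. The inclusions $\bigcup_\omega \dot{\mathcal{B}}'_\omega(\mathbb{R}^n) \subseteq \bigcup_\omega \mathcal{B}'_\omega(\mathbb{R}^n) \subseteq \mathcal{K}'_1(\mathbb{R}^n)$ are immediate from the definitions given in Section \ref{spaces}. For the reverse inclusion, I would fix $f \in \mathcal{K}'_1(\mathbb{R}^n)$ and any nonzero window $\psi \in \mathcal{K}_1(\mathbb{R}^n)$, and apply Lemma \ref{lemmaExchange}(a) to the singleton $B'=\{f\}$ (with $\lambda=0$) to produce $k\in\mathbb{N}_0$ and $C>0$ such that $|V_\psi f(x,\xi)| \leq C e^{k|x|}(1+|\xi|)^k$. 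Setting $\omega(x) := e^{(k+1)|x|}$, which is plainly exponentially moderate, and picking any integer $s'>k$, the bound yields
$$
(1+|\xi|)^{-s'} \frac{|V_\psi f(x,\xi)|}{\omega(x)} \leq C(1+|\xi|)^{k-s'} e^{-|x|} \longrightarrow 0
$$
as $|(x,\xi)|\to\infty$; hence condition $(c')$ of Theorem \ref{omega bounded}(ii) holds and $f\in\dot{\mathcal{B}}'_\omega(\mathbb{R}^n)$, completing the three-way equality.

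For the \emph{in particular} clause, in the forward direction I would invoke what was just established to obtain $f \in \mathcal{B}'_\omega(\mathbb{R}^n)$ for some exponentially moderate $\omega$, then use Theorem \ref{omega bounded}(i), $(a)\Rightarrow(b)$, to get boundedness of $\{T_{-h}f/\omega(h)\}$ in $\mathcal{K}'_1(\mathbb{R}^n)$, hence in $\mathcal{D}'(\mathbb{R}^n)$. Putting $y=0$ in (\ref{weight omega}) gives $\omega(h)\leq A\omega(0)e^{a|h|}$, so with $s=a$ the scalars $e^{-s|h|}\omega(h)$ are uniformly bounded and therefore $\{e^{-s|h|}T_{-h}f\}$ is bounded in $\mathcal{D}'(\mathbb{R}^n)$. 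Conversely, given such boundedness in $\mathcal{D}'(\mathbb{R}^n)$ for some $s\in\mathbb{R}$, the weight $\omega(h):=e^{s|h|}$ is exponentially moderate, and Remark \ref{rk4.1} (namely the $\mathcal{D}'$-version of the implication $(b)\Rightarrow(a)$ in Theorem \ref{omega bounded}(i)) delivers $f\in\mathcal{B}'_\omega(\mathbb{R}^n)\subset\mathcal{K}'_1(\mathbb{R}^n)$.

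The only mildly delicate step is the choice of weight that lifts $f$ from a mere element of $\mathcal{B}'_\omega$ to an element of the smaller subspace $\dot{\mathcal{B}}'_\omega$: the trick is to pay a small additional exponential factor $e^{\varepsilon|x|}$ beyond the a priori growth rate of $V_\psi f$ supplied by Lemma \ref{lemmaExchange}(a), which forces the decay demanded by condition $(c')$. Beyond this observation, everything reduces to a direct composition of results already in hand.
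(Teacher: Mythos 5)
Your proof is correct and follows exactly the route the paper intends: the corollary is stated as an immediate consequence of Theorem \ref{omega bounded}, and your argument simply spells this out using the a priori STFT bound from Lemma \ref{lemmaExchange}(a) to produce an admissible weight $\omega(x)=e^{(k+1)|x|}$ verifying condition $(c')$, together with Remark \ref{rk4.1} for the $\mathcal{D}'$-version of $(a)\Leftrightarrow(b)$ in the \emph{in particular} clause. No gaps; the weight-choice step you flag as delicate is handled correctly.
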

\section{Characterizations through modulation spaces}\label{modulation spaces}
We present here the characterization of the spaces $\mathcal{K}_{1}(\mathbb{R}^{n})$, $\mathcal{K}_{1}'(\mathbb{R}^{n})$, $\mathcal{B}'_{\omega}(\mathbb{R}^{n})$, $\dot{\mathcal{B}}'_{\omega}(\mathbb{R}^{n})$, $\mathcal{U}(\mathbb{C}^{n})$, and $\mathcal{U}'(\mathbb{C}^{n})$ in terms of modulation spaces.

Let us recall the definition of the modulation spaces. There are several equivalent ways to introduce them \cite{gr01}. Here we follow the approach from \cite{cordero,c-p-r-t} based on Gelfand-Shilov spaces. We are interested in modulation spaces with respect to weights that are exponentially moderate. We denote by $\mathfrak{M}$ the class of all weight functions $m$ on $\mathbb{R}^{2n}$ that satisfy inequalities (for some constants $A>0$ and $a\geq0$):
$$
\frac{m(x_1+x_2,\xi_1+\xi_2)}{m(x_1,\xi_1)}\leq A e^{a(|x_2|+|\xi_2|)}, \  \  \ (x_1,\xi_1),(x_2,\xi_2)\in\mathbb{R}^{2n}.
$$
Observe that any so-called $v$-moderate weight \cite{gr01} belongs to $\mathfrak{M}$. We also consider the Gelfand-Shilov space
 $\Sigma_{1}^{1}(\mathbb{R}^{n})$ of Beurling type (sometimes also denoted as $\mathcal{S}^{(1)}(\mathbb{R}^{n})$ or $\mathcal{G}(\mathbb{R}^{n})$) and its dual $(\Sigma_{1}^{1})'(\mathbb{R}^{n})$. The space $\Sigma_{1}^{1}(\mathbb{R}^{n})$ consists \cite{CKL} of all entire functions $\varphi$ such that
$$
\sup_{x\in\mathbb{R}^{n}}|\varphi(x)|e^{\lambda |x|}<\infty \ \ \mbox{ and } \ \ \sup_{\xi\in\mathbb{R}^{n}}|\widehat{\varphi}(\xi)|e^{\lambda |\xi|}d\xi<\infty, \  \  \  \forall{\lambda>0}.
$$
We refer to \cite{pilipovic88} for topological properties of $\Sigma_{1}^{1}(\mathbb{R}^{n})$. The dual space $(\Sigma_{1}^{1})'(\mathbb{R}^{n})$ is also known as the space of Silva ultradistributions of exponential type \cite{hoskins-pinto,SS2} or the space of Fourier ultra-hyperfunctions \cite{PM}. If $m\in\mathfrak{M}$, $\psi\in\Sigma_{1}^{1}(\mathbb{R}^{n})\setminus\{0\}$, and $p, q \in[1,\infty]$, the modulation space $M_{m}^{p,q}(\mathbb{R}^{n})$ is defined as the Banach space
\begin{equation}
\label{modulation}
M_{m}^{p,q}(\mathbb R^n)=\{f \in \mathcal (\Sigma_{1}^{1})'(\mathbb R^n):\: \|f\|_{M_{m}^{p,q}}:=\|V_{\psi}f\|_{L_{m}^{p,q}}<\infty\}.
\end{equation}
This definition does not depend on the choice of the window $\psi$, as different windows lead to equivalent norms.
If $p=q$, then we write $M^{p}_{m}(\mathbb{R}^{n})$ instead of $M_{m}^{p, q}(\mathbb{R}^{n})$. The space $M^{1}_{m}(\mathbb{R}^{n})$ (for $m=1$) was original introduced by Feichtinger in \cite{fe81-3}. We shall also define $\dot{M}_{m}^{\infty}(\mathbb{R}^{n})$ as the closed subspace of $M^{\infty}_{m}(\mathbb{R}^{n})$ given by $\dot{M}_{m}^{\infty}(\mathbb{R}^{n})=\{f\in\mathcal (\Sigma_{1}^{1})'(\mathbb R^n):\: \lim_{|(x,\xi)|\to\infty}m(x,\xi)|V_{\psi}f(x,\xi)|=0\}$.

We now connect the space of exponential distributions with the modulation spaces. For it, we consider the weight subclass $\mathfrak{M}_{1}\subset \mathfrak{M}$ consisting of all weights $m$ such that (for some $s,a\geq 0$ and $A>0$)
\begin{equation}
\label{weight mod}
\frac{m(x_1+x_2,\xi_1+\xi_2)}{m(x_1,\xi_1)}\leq A e^{a|x_2|}(1+|\xi_{2}|)^{s}, \ \ \ (x_1,\xi_1),(x_2,\xi_2)\in\mathbb{R}^{2n}.
\end{equation}
Let $m\in\mathfrak{M}_{1}$. By Proposition \ref{neprekinatSTFT}, $\mathcal{K}_{1}(\mathbb{R}^{n})\subset M^{p,q}_{m}(\mathbb{R}^{n})$.  Since $\Sigma_{1}^{1}(\mathbb{R}^{n})\hookrightarrow \mathcal{K}_{1}(\mathbb{R}^{n})$, we obtain that $\mathcal{K}_{1}(\mathbb{R}^{n})$ is dense (weakly$^{\ast}$ dense if $p=\infty$ or $q=\infty$) in $M^{p,q}_{m}(\mathbb{R}^{n})$ and therefore $M^{p,q}_{m}(\mathbb{R}^{n})\subset \mathcal{K}'_{1}(\mathbb{R}^{n})$. It follows from the results of \cite{gr01} that we may use $\psi\in\mathcal{K}_{1}(\mathbb{R}^{n})\setminus\{0\}$ in (\ref{modulation}). Also, if $f\in M_{m}^{p,q}(\mathbb{R}^{n})$ and $\psi\in\mathcal{K}_{1}(\mathbb{R}^{n})$ then $V_{\psi}f$ is an entire function in the second variable (cf. Section \ref{STFT}); the next proposition describes the norm behavior of $V_{\psi}f(x,z)$ in the complex variable $z\in\mathbb{C}^{n}$.

\begin{proposition}\label{modulation complex variable}
Let $m\in \mathfrak{M}_{1}$, $p,q\in[1,\infty]$, and $\psi\in\mathcal{K}_{1}(\mathbb{R}^{n})\setminus\{0\}$. If $f\in M_{m}^{p,q}(\mathbb{R}^{n})$, then $(\forall\lambda\geq 0)$
\begin{equation}
\label{norm modulation complex}
\sup_{|\eta|\leq \lambda}\left(\int_{\mathbb R^n}\left(\int_{\mathbb R^n}|e^{-2\pi x\cdot \eta}V_\psi f(x,\xi+i\eta)m(x,\xi)|^{p} dx\right)^{q/p}d\xi\right)^{1/q}<C_{\lambda} \|f\|_{M^{p,q}_{m}}.
\end{equation}
(With obvious changes if $p=\infty$ or $q=\infty$.)
\end{proposition}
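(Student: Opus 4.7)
The plan is to reduce the complex-shift estimate to a window-change statement for modulation spaces. First, I would observe the key identity: for $\eta\in\mathbb{R}^n$, setting $\psi_\eta(u):=\psi(u)e^{2\pi\eta\cdot u}$ (still a nonzero element of $\mathcal{K}_1(\mathbb{R}^n)$ since $\psi$ decays faster than any exponential), a direct computation from (\ref{defSTFT}) gives
\[
V_\psi f(x,\xi+i\eta)=e^{2\pi\eta\cdot x}\,V_{\psi_\eta}f(x,\xi),\qquad x,\xi,\eta\in\mathbb{R}^n.
\]
Consequently, the left-hand side of (\ref{norm modulation complex}) is exactly $\sup_{|\eta|\leq\lambda}\|V_{\psi_\eta}f\|_{L^{p,q}_m}$, and so the proposition becomes the assertion that the modulation-space norms produced by the windows $\psi_\eta$, $|\eta|\leq\lambda$, are uniformly comparable to the one produced by $\psi$.

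Next I would invoke the standard window-change machinery for modulation spaces (Gr\"ochenig, Chapter 11): with the auxiliary window $\psi$ itself, one has the pointwise bound
\[
|V_{\psi_\eta}f(x,\xi)|\leq \|\psi\|_{2}^{-2}\bigl(|V_\psi f|\ast|V_{\psi_\eta}\psi|\bigr)(x,\xi),
\]
and, since $m\in\mathfrak{M}_1$ is moderate with respect to the weight $v(x,\xi):=A\,e^{a|x|}(1+|\xi|)^{s}$ by (\ref{weight mod}), the convolution inequality $\|F\ast G\|_{L^{p,q}_m}\leq C\|F\|_{L^{p,q}_m}\|G\|_{L^{1}_v}$ produces
\[
\|V_{\psi_\eta}f\|_{L^{p,q}_m}\leq C\,\|V_\psi f\|_{L^{p,q}_m}\cdot\|V_{\psi_\eta}\psi\|_{L^{1}_v}.
\]
Everything is therefore reduced to proving $\sup_{|\eta|\leq\lambda}\|V_{\psi_\eta}\psi\|_{L^{1}_v}<\infty$.

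For this last step I would simply estimate the integrand. Writing
\[
V_{\psi_\eta}\psi(x,\xi)=\int_{\mathbb{R}^n}\psi(t)\overline{\psi(t-x)}\,e^{2\pi\eta\cdot(t-x)}\,e^{-2\pi i\xi\cdot t}\,dt,
\]
integration by parts in $t$ introduces arbitrary negative powers of $(1+|\xi|)$ at the cost of derivatives of $\psi$, and then the factor $|e^{2\pi\eta\cdot(t-x)}|\leq e^{2\pi\lambda|t-x|}$ is absorbed into the exponential decay of $\psi(t-x)$ by using a norm $\nu_k(\psi)$ with $k$ sufficiently large relative to $\lambda$, $a$ and $s$. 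A short convolution-of-exponentials argument then yields, for any prescribed $N$ and $b$,
\[
|V_{\psi_\eta}\psi(x,\xi)|\leq C_{N,b,\lambda}\,(1+|\xi|)^{-N}e^{-b|x|},\qquad |\eta|\leq\lambda,
\]
with $C_{N,b,\lambda}$ independent of $\eta$. Choosing $N>s+n$ and $b>a$ makes the $L^{1}_v$ norm finite uniformly in $\eta$, and the proposition follows.

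The main obstacle is this last uniform bound in $\eta$: it is precisely where the exponential decay built into $\mathcal{K}_1(\mathbb{R}^n)$ is indispensable, since the complex shift by $\eta$ introduces exponential growth along $t$ that only windows with super-exponential decay can absorb uniformly on $|\eta|\leq\lambda$. Once this uniformity is established, the rest is routine modulation-space calculus.
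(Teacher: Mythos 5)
Your proof is correct and follows essentially the same route as the paper: the identity $e^{-2\pi x\cdot\eta}V_\psi f(x,\xi+i\eta)=V_{\psi_\eta}f(x,\xi)$ with $\psi_\eta(t)=e^{2\pi\eta\cdot t}\psi(t)$, followed by the change-of-window estimate $\|V_{\psi_\eta}f\|_{L^{p,q}_m}\leq C\|V_{\psi_\eta}\psi\|_{L^1_v}\|V_\psi f\|_{L^{p,q}_m}$ from Gr\"ochenig's Proposition 11.3.2, reducing everything to $\sup_{|\eta|\leq\lambda}\|V_{\psi_\eta}\psi\|_{L^1_v}<\infty$. The only (harmless) difference is that you obtain this last uniform bound by a direct integration-by-parts estimate, whereas the paper gets it by noting that $\{\psi_\eta:|\eta|\leq\lambda\}$ is bounded in $\mathcal{K}_1(\mathbb{R}^n)$ and invoking the continuity of $V$ from Proposition \ref{neprekinatSTFT}.
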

\begin{proof} Assume that $m$ satisfies (\ref{weight mod}) and set $v(x,\xi)=(1+|\xi|)^{s}e^{a|x|}$. Notice first that ${e^{-2\pi x\cdot \eta}V_\psi f(x,\xi+i\eta)= V_{\psi_{\eta}}f(x,\xi)}$, where $\psi_{\eta}(t)=e^{2\pi \eta\cdot t}\psi(t)$. As in the proof of \cite[Prop. 11.3.2, p. 234]{gr01},
$$
||V_{\psi_{\eta}}f||_{L^{p,q}_{m}}=\frac{1}{\|\psi\|^{2}_{L^{2}}}||(V_{\psi_\eta}V_\psi^*)V_\psi f||_{L^{p,q}_m}\leq C||V_{\psi_\eta}\psi||_{L^1_v}||V_{\psi}f||_{L^{p,q}_m}.
$$
Since $\{\psi_{\eta}: |\eta|\leq \lambda\}$ is bounded in $\mathcal{K}_{1}(\mathbb{R}^{n})$, we obtain that $\{V_{\psi_{\eta}}\psi:\:|\eta|\leq \lambda\}$ is bounded in $\mathcal{K}_{1}(\mathbb{R}^{n})\widehat{\otimes}\mathcal{U}(\mathbb{C}^{n})$; hence $\sup_{|\eta|\leq \lambda}||V_{\psi_\eta}\psi||_{L^1_v}<\infty$.

\end{proof}
Using the fundamental identity of time-frequency analysis, i.e. \cite[p. 40]{gr01}  $V_{\psi}f(x,\xi)= e^{-2\pi i x\cdot \xi}V_{\widehat{\psi}}\widehat{f}(\xi,-x)$, we can transfer results from $\mathcal{K}_{1}'(\mathbb{R}^{n})$ into $\mathcal{U}'(\mathbb{R}^{n})$ by employing the weight class $\mathfrak{M}_{2}=\{m\in\mathfrak{M}:\: \tilde{m}(x,\xi)= m(\xi,x)\in \mathfrak{M}_{1}\}$. For $s,a\geq0$, we employ the following special classes of weights ($\omega$ satisfies the conditions imposed in Subsection \ref{spaces}):
$$
v_{s,a}(x,\xi):=e^{a|x|}(1+|\xi|)^{s} \ \ \mbox{ and } \omega_{s}(x,\xi):=\omega(x) (1+|\xi|)^{s}.
$$
Clearly $v_{s,a},\omega_{s}\in \mathfrak{M}_{1}$. Obviously, for every $m\in\mathfrak{M}_{1}$ there are $s,a\geq 0$ such that ${M^{p,q}_{v_{s,a}}(\mathbb{R}^{n})\subseteq M^{p,q}_{m}(\mathbb{R}^{n})\subseteq M^{p,q}_{1/v_{s,a}}(\mathbb{R}^{n})}$.

\begin{proposition}\label{modulation th} Let $p,q\in[1,\infty]$. Then,

\begin{equation}\label{modspace1}\mathcal K_{1}'(\mathbb R^n)=\bigcup_{m\in\mathfrak{M}_{1}}M_{m}^{p,q}(\mathbb R^n), \  \  \ \ \ \mathcal{U}'(\mathbb C^n)=\bigcup_{m\in\mathfrak{M}_{2}}M_{m}^{p,q}(\mathbb R^n),\end{equation}

\begin{equation}\label{modspace2}
\mathcal K_{1}(\mathbb R^n)=\bigcap_{m\in\mathfrak{M}_{1}}M_{m}^{p,q}(\mathbb R^n), \  \  \ \ \ \mathcal{U}(\mathbb C^n)=\bigcap_{m\in\mathfrak{M}_{2}}M_{m}^{p,q}(\mathbb R^n),
  \end{equation}
	
	\begin{equation}\label{modspace3}
\mathcal {B} '_{\omega}(\mathbb R^n)=\bigcup_{s> 0}M_{1/\omega_{s}}^{\infty}(\mathbb R^n), \  \mbox{ and } \ \dot{\mathcal{B}}'_{\omega}(\mathbb R^n)=\bigcup_{s> 0}\dot{M}_{1/\omega_{s}}^{\infty}(\mathbb R^n).
  \end{equation}

 \end{proposition}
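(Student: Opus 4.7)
My plan is to prove the identities \eqref{modspace1}--\eqref{modspace3} by establishing the two set inclusions separately and to reduce the statements for $\mathcal{U}(\mathbb{C}^n)$ and $\mathcal{U}'(\mathbb{C}^n)$ to those for $\mathcal{K}_1(\mathbb{R}^n)$ and $\mathcal{K}_1'(\mathbb{R}^n)$ via the Fourier isomorphism. The key is the fundamental identity $V_{\psi}f(x,\xi)=e^{-2\pi i x\cdot\xi}V_{\widehat\psi}\widehat f(\xi,-x)$: under the substitution $m^{\ast}(u,v):=m(-v,u)$ the modulation norms of $f$ relative to $m$ become modulation norms of $\widehat f$ relative to $m^{\ast}$ (with an exchange of the $p$- and $q$-integrations), and $m\mapsto m^{\ast}$ is a bijection $\mathfrak{M}_1\leftrightarrow\mathfrak{M}_2$. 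Throughout I fix $\psi\in\mathcal{K}_1(\mathbb{R}^n)\setminus\{0\}$ and a synthesis window $\gamma\in\mathcal{K}_1(\mathbb{R}^n)$.

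For \eqref{modspace1}, the inclusion $\bigcup_{m}M^{p,q}_{m}\subseteq \mathcal{K}'_1$ was already noted in the discussion preceding Proposition \ref{modulation complex variable}. For the reverse, given $f\in\mathcal{K}'_1(\mathbb{R}^n)$, Lemma \ref{lemmaExchange}(a) applied to the singleton $\{f\}$ with $\lambda=0$ yields $k\in\mathbb{N}_0$ with $|V_{\psi}f(x,\xi)|\leq Ce^{k|x|}(1+|\xi|)^{k}$. A direct verification shows $m(x,\xi):=e^{-A|x|}(1+|\xi|)^{-N}\in\mathfrak{M}_1$ for every $A,N\geq 0$, so choosing $A>k$ and $N$ large (in terms of $p,q,n$) renders $\|V_{\psi}f\|_{L^{p,q}_{m}}$ finite and thus $f\in M^{p,q}_{m}$.

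For \eqref{modspace2}, the inclusion $\mathcal{K}_1\subseteq\bigcap_{m}M^{p,q}_{m}$ follows from Proposition \ref{neprekinatSTFT}(i): if $f\in\mathcal{K}_1$ then $V_{\psi}f\in\mathcal{K}_1\widehat\otimes \mathcal{U}$ decays faster than any exponential in $x$ and any polynomial in $\xi$, so $V_{\psi}f\in L^{p,q}_{m}$ for every $m\in\mathfrak{M}_1$. Conversely, the strategy is to show that $f\in\bigcap_{m}M^{p,q}_{m}$ forces $V_{\psi}f\in\mathcal{K}_1\widehat\otimes\mathcal{U}$; Proposition \ref{neprekinatSTFT}(ii) combined with the reconstruction formula \eqref{reconstructiontest} then yields $f=(\gamma,\psi)^{-1}_{L^2}V^{\ast}_{\gamma}V_{\psi}f\in\mathcal{K}_1$. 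The required bounds on all derivatives of $V_{\psi}f$ come from the identities $\partial_{x}^{\alpha}V_{\psi}f=(-1)^{|\alpha|}V_{\psi^{(\alpha)}}f$ and $\partial_{\xi_j}V_{\psi}f=-2\pi i(V_{s_j\psi}f+x_jV_{\psi}f)$, which, upon iteration, express every mixed derivative as a finite polynomial-in-$x$ combination of STFTs with windows still belonging to $\mathcal{K}_1$. Uniform pointwise estimates with heavy weights are deduced from the $L^{p,q}$-hypothesis by the standard modulation-space embedding $M^{p,q}_{mv}\hookrightarrow M^{\infty}_{m}$ (valid for some $v\in\mathfrak{M}$ depending only on $p,q,n$), and Proposition \ref{modulation complex variable} upgrades real-variable bounds in $\xi$ to uniform bounds on horizontal strips $\Pi_{k}$, giving finiteness of every $\rho_{k}(V_{\psi}f)$.

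Finally, \eqref{modspace3} is a direct translation of Theorem \ref{omega bounded}: by definition, condition $(c)$ of part $(i)$ says precisely that $f\in M^{\infty}_{1/\omega_s}$ for some $s$, and condition $(c)'$ of part $(ii)$ says precisely that $f\in\dot{M}^{\infty}_{1/\omega_{s'}}$ for some $s'$. The principal obstacle of the whole proof is the reverse inclusion in \eqref{modspace2}: one is given only $L^{p,q}$-weighted control of $V_{\psi}f$ itself, yet the target space $\mathcal{K}_1\widehat\otimes\mathcal{U}$ demands uniform pointwise control of \emph{all} mixed derivatives of $V_{\psi}f$ together with their holomorphic extensions to horizontal strips. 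Overcoming it requires the interplay of the derivative identities, the $L^{p,q}\to L^{\infty}$ modulation-space embedding with heavier weights, and Proposition \ref{modulation complex variable} to pass from $\xi\in\mathbb{R}^n$ to $z\in\Pi_{k}$.
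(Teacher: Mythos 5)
Your proposal is correct in outline, and for \eqref{modspace3}, \eqref{modspace1}, and the transfer to $\mathcal{U}(\mathbb{C}^n)$ and $\mathcal{U}'(\mathbb{C}^n)$ it runs essentially parallel to the paper (Theorem \ref{omega bounded}, Lemma \ref{lemmaExchange}(a), the fundamental identity of time-frequency analysis). The genuine divergence is in the hard inclusion $\bigcap_{m\in\mathfrak{M}_1}M^{p,q}_m\subseteq\mathcal{K}_1(\mathbb{R}^n)$. The paper, like you, first reduces via the embedding chain from \cite[Cor.\ 12.1.10]{gr01} to $f\in\bigcap_{s,a>0}M^{\infty}_{v_{s,a}}$, but then works on the Fourier side: it chooses a compactly supported window forming a partition of unity, writes $\widehat f=\sum_{j\in\mathbb{Z}^n}V_\psi f(j,\cdot)$, and uses Proposition \ref{modulation complex variable} together with the Weierstrass theorem to conclude $\widehat f\in\mathcal{U}(\mathbb{C}^n)$; this route has the side benefit of yielding the corollary on the holomorphic extension of $\widehat f$ for a single weight $v_{s,a}$, which your argument does not produce. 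You instead estimate $V_\psi f$ itself in $\mathcal{K}_1(\mathbb{R}^n)\widehat\otimes\mathcal{U}(\mathbb{C}^n)$ (via $\partial_x^\alpha V_\psi f=(-1)^{|\alpha|}V_{\psi^{(\alpha)}}f$, the passage to $M^{\infty}$ norms with heavier weights, and Proposition \ref{modulation complex variable} on the strips $\Pi_k$) and then invert; this is a legitimate alternative, and note that your $\xi$-derivative identities are superfluous, since the norms $\rho_k$ involve only $x$-derivatives together with holomorphy in $z$. One repair is needed at the last step: you cannot invoke \eqref{reconstructiontest}, which is the inversion on $\mathcal{K}_1(\mathbb{R}^n)$ and hence circular here. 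Use instead \eqref{reconstructiondistributions}, or more directly the desingularization formula \eqref{desingular} with $\eta=0$ followed by Fubini (justified by the decay you have already established for $V_\psi f$), to identify $f$ with the function $(\gamma,\psi)_{L^2}^{-1}V^{*}_{\gamma}(V_\psi f)$ given by the integral \eqref{adjoint}; the estimate in the proof of Proposition \ref{neprekinatSTFT}(ii) then shows that this function belongs to $\mathcal{K}_1(\mathbb{R}^n)$.
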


\begin{proof}
The results for $\mathcal{U}(\mathbb{C}^{n})$ and $\mathcal{U}'(\mathbb{C}^{n})$ follow from those for $\mathcal{K}_{1}(\mathbb{R}^{n})$ and $\mathcal{K}'_{1}(\mathbb{R}^{n})$.  The equalities in (\ref{modspace3}) are a reformulation of the equivalences $(a)\Leftrightarrow(c)$ and $(a)'\Leftrightarrow(c)'$ from Theorem \ref{omega bounded}. By (\ref{weight mod}) and \cite[Cor. 12.1.10, p. 254]{gr01}, given $m\in\mathfrak{M}_{1}$, there are $s,a>0 $ such that the embeddings $M^{\infty}_{v_{s+n+1,a+\varepsilon}}(\mathbb{R}^{n})\subseteq M^{p,q}_{m}(\mathbb{R}^{n})\subseteq M^{\infty}_{1/v_{s,a}}(\mathbb{R}^{n})$ hold. Thus, part $(a)$ from Lemma \ref{lemmaExchange} gives the equality $\mathcal{K}_{1}'(\mathbb{R}^{n})=\bigcup_{s,a>0}M^{\infty}_{1/v_{s,a}}(\mathbb{R}^{n})=\bigcup_{m\in\mathfrak{M}_{1}}M_{m}^{p,q}(\mathbb R^n).$ In view of Proposition \ref{neprekinatSTFT}, it only remains to show that
$$
\bigcap_{m\in\mathfrak{M}_{1}}M_{m}^{p,q}(\mathbb R^n)=\bigcap_{s,a>0}M^{\infty}_{v_{s,a}}(\mathbb{R}^{n})\subseteq \mathcal{K}_{1}(\mathbb{R}^{n}).
$$
We show the latter inclusion by proving that if $f\in M^{\infty}_{v_{s,a}}(\mathbb{R}^{n})$ (with $s,a>0$), then $\widehat{f}$ is holomorphic in the tube $\mathbb{R}^{n}+i\{\eta\in\mathbb{R}^{n}:\: |\eta|<a/(2\pi)\}$ and satisfies
\begin{equation}
\label{analytic bound}
\sup_{|\Im m\: z|\leq \lambda} (1+|z|^{2})^{s/2} |\widehat{f}(z)|<\infty, \  \  \ \forall\lambda<\frac{a}{2\pi}.
\end{equation}
In fact, choose a positive window $\psi\in \mathcal{D}(\mathbb{R}^{n})$ such that $\sum_{j\in \mathbb Z^n}\psi(t-j)=1$ for all $t\in\mathbb{R}^{n}$. Since $f=\sum_{j \in \mathbb {Z}^n}fT_{j}\psi$, we obtain $\widehat{f}=\sum_{j\in\mathbb{Z}^{n}}V_{\psi}f(j, \:\cdot\:)$, with convergence in $\mathcal{U}'(\mathbb{C}^{n})$. In view of Proposition \ref{modulation complex variable}, each $V_{\psi}f(j, z)$ is entire in $z$ and satisfies the bounds
$$
\sup_{|\Im m\: z|\leq \lambda}|(1+|z|^{2})^{s/2}|V_{\psi}(j,z)|< C_{\lambda } e^{-(a-2\pi\lambda)|j|}.
$$
The Weierstrass theorem implies that $\widehat{f}(z)=\sum_{j\in\mathbb{Z}^{n}}V_{\psi}f(j, z)$ is holomorphic in the stated tube domain and we also obtain (\ref{analytic bound}). Summing up, if $f\in\bigcap_{s,a>0}M^{\infty}_{v_{s,a}}(\mathbb{R}^{n})$, then $\widehat{f}\in\mathcal{U}(\mathbb{C}^{n})$, i.e., $f\in\mathcal{K}_{1}(\mathbb{R}^{n})$.
\end{proof}

The following corollary collects what was shown in the proof of Proposition \ref{modulation th}.
\begin{corollary} Let $s,a>0$. If $f\in M_{v_{s,a}}^{\infty}(\mathbb R^n)$, then $\widehat{f}$ is holomorphic in the tube $\mathbb{R}^{n}+i\{\eta\in\mathbb{R}^{n}:\: |\eta|<a/(2\pi)\}$ and satisfies the bounds (\ref{analytic bound}).
\end{corollary}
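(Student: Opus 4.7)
The plan is to extract the argument that is already essentially contained in the proof of Proposition \ref{modulation th}, as the Corollary's conclusion is exactly the auxiliary holomorphic extension statement established there in order to deduce $\bigcap_{s,a>0} M^{\infty}_{v_{s,a}}(\mathbb{R}^{n})\subseteq \mathcal{K}_{1}(\mathbb{R}^{n})$. So what I want to do is isolate that substep cleanly.

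First, I would fix a non-negative window $\psi\in\mathcal{D}(\mathbb{R}^{n})$ forming a partition of unity on $\mathbb{R}^{n}$, namely $\sum_{j\in\mathbb{Z}^{n}}\psi(t-j)=1$. Then since $\psi$ is real,
\[
\widehat{f\cdot T_{j}\psi}(\xi)=\int_{\mathbb{R}^{n}}f(t)\psi(t-j)e^{-2\pi i t\cdot\xi}\,dt = V_{\psi}f(j,\xi),
\]
so the decomposition $f=\sum_{j}f\cdot T_{j}\psi$ gives $\widehat{f}=\sum_{j\in\mathbb{Z}^{n}}V_{\psi}f(j,\,\cdot\,)$, with convergence in $\mathcal{U}'(\mathbb{C}^{n})$ (and at worst in $\mathcal{S}'(\mathbb{R}^{n})$ after restriction to $\mathbb{R}^{n}$). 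Because $\psi$ has compact support, each function $z\mapsto V_{\psi}f(j,z)$ is entire on $\mathbb{C}^{n}$, as already observed in Section \ref{STFT}.

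Next I bring in Proposition \ref{modulation complex variable} with $p=q=\infty$ and $m=v_{s,a}$. It supplies, for every $\lambda\geq 0$,
\[
\sup_{|\eta|\leq\lambda}\,\sup_{(x,\xi)\in\mathbb{R}^{2n}} e^{-2\pi x\cdot\eta}\,e^{a|x|}(1+|\xi|)^{s}\,|V_{\psi}f(x,\xi+i\eta)|\leq C_{\lambda}\|f\|_{M^{\infty}_{v_{s,a}}}.
\]
Specializing $x=j\in\mathbb{Z}^{n}$ and using $e^{2\pi j\cdot\eta}\leq e^{2\pi\lambda|j|}$, together with the comparability of $(1+|\xi|)$ and $(1+|z|)$ on the strip $|\Im m\: z|\leq \lambda$, I obtain
\[
\sup_{|\Im m\: z|\leq\lambda}(1+|z|^{2})^{s/2}|V_{\psi}f(j,z)|\leq C'_{\lambda}\, e^{-(a-2\pi\lambda)|j|}.
\]
Whenever $\lambda<a/(2\pi)$ the exponent $a-2\pi\lambda$ is strictly positive, so the series $\sum_{j\in\mathbb{Z}^{n}}V_{\psi}f(j,z)$ converges normally on every compact subset of the tube $\mathbb{R}^{n}+i\{\eta:|\eta|<a/(2\pi)\}$.

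Finally, the Weierstrass theorem turns this normal convergence into holomorphy of the limit on the full tube, and since the series agrees with $\widehat{f}$ at the level of $\mathcal{U}'(\mathbb{C}^{n})$, we conclude that $\widehat{f}$ extends holomorphically to that tube. Summing the geometric bounds over $j\in\mathbb{Z}^{n}$ yields (\ref{analytic bound}). The only subtle point is reconciling the $\mathcal{U}'$-convergence of $\widehat{f}=\sum_{j}V_{\psi}f(j,\,\cdot\,)$ with pointwise holomorphic convergence on the tube, which is precisely where Weierstrass's theorem enters; everything else is routine bookkeeping of the estimate.
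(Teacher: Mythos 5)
Your proposal is correct and follows essentially the same route as the paper: the Corollary is explicitly stated as collecting what was shown inside the proof of Proposition \ref{modulation th}, namely the partition-of-unity decomposition $\widehat{f}=\sum_{j\in\mathbb{Z}^{n}}V_{\psi}f(j,\cdot)$, the bounds from Proposition \ref{modulation complex variable} giving decay $e^{-(a-2\pi\lambda)|j|}$ on strips $|\Im m\:z|\leq\lambda<a/(2\pi)$, and the Weierstrass theorem to conclude holomorphy and (\ref{analytic bound}). Your extraction and bookkeeping of that substep are accurate.
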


We make a remark concerning Proposition \ref{modulation th}.

\begin{remark}
\label{rk5} Employing \cite[Thrms. 3.2 and 3.4]{toft}, Proposition \ref{modulation th}
can be extended for $p,q\in (0,\infty].$
\end{remark}

\section{Tauberian theorems for S-asymptotics of distributions}\label{s-asym}

In this section we characterize the so-called $S$-asymptotic behavior of distributions in terms of the STFT. We briefly explain this notion;  we refer to \cite{PSV} for a complete treatment of the subject.

Let $f\in \mathcal{K}'_{1}(\mathbb{R}^{n})$. The idea of the $S$-asymptotics is to study the asymptotic properties of the translates $T_{-h}f$ with respect to a locally bounded and measurable comparison function  $c:\mathbb{R}^{n}\to (0,\infty)$. It is said that $f$ has $S$-asymptotic behavior with respect to $c$ if there is $g\in\mathcal{D}'(\mathbb{R}^{n})$ such that
\begin{equation}
\label{Seq1}\lim_{|h|\to\infty} \frac{1}{c(h)}T_{-h}f= g \  \  \ \mbox{in }  \mathcal{D}'(\mathbb{R}^{n}).
\end{equation}
The distribution $g$ is not arbitrary; in fact, one can show \cite{PSV} that the relation (\ref{Seq1}) forces it to have the form $g(t)=C e^{\beta \cdot t}$, for some $C\in\mathbb{R}$ and $\beta\in \mathbb{R}^{n}$. If $C\neq 0$, one can also prove \cite{PSV} that $c$ must satisfy the asymptotic relation
\begin{equation}
\label{Seq2}\lim_{|h|\to\infty} \frac{c(t+h)}{c(h)}= e^{\beta \cdot t},  \  \  \ \mbox{uniformly for }t \mbox{ in compact subsets of } \mathbb{R}^{n}.
\end{equation}
>From now on, we shall always assume that $c$ satisfies (\ref{Seq2}). A typical example of  such a $c$ is any function of the form $c(t)=e^{\beta \cdot t} L(e^{|t|})$, where $L$ is a Karamata slowly varying function \cite{BGT}. The assumption (\ref{Seq2}) implies \cite{PSV} that (\ref{Seq1}) actually holds in the space $\mathcal{K}'_{1}(\mathbb{R}^{n})$.  We will use the more suggestive notation
\begin{equation}
\label{Seq3}f(t+h)\sim c(h)g(t)  \  \  \ \mbox{ in } \mathcal{K}'_{1}(\mathbb{R}^{n})\  \mbox{ as }|h|\to\infty
\end{equation}
for denoting (\ref{Seq1}), which of course means that
$
(f\ast \check{\varphi})(h) \sim c(h)\int_{\mathbb{R}^{n}}\varphi(t) g(t) dt$ as $|h|\to\infty,$
for each $\varphi \in \mathcal{D}(\mathbb{R}^{n})$ (or, equivalently, $\varphi \in \mathcal{K}_1(\mathbb{R}^{n})$). In order to move further, we give an asymptotic representation formula and Potter type estimates \cite{BGT} for $c$:
\begin{lemma} \label{lemma c} The locally bounded measurable function $c$ satisfies (\ref{Seq2}) if and only if there is $b\in C^{\infty}(\mathbb{R}^{n})$ such that $\lim_{|x|\to \infty}b^{(\alpha)}(x)=0$ for every  multi-index $|\alpha|>0$
and
\begin{equation}\label{representation eq}
c(x)\sim \exp \left(\beta \cdot x+ b(x) \right) \ \ \ \mbox{ as }|x|\to\infty.
\end{equation}
In particular, for each $\varepsilon>0$ there are constants $a_{\varepsilon}, A_{\varepsilon}>0$ such that
\begin{equation}
\label{Seq5}
a_{\varepsilon}\exp(\beta\cdot t-\varepsilon |t|)\leq \frac{ c(t+h)}{c(h)}\leq A_{\varepsilon}\exp(\beta\cdot t+\varepsilon |t|), \ \ \  t, h\in\mathbb{R}^{n}.
\end{equation}
\end{lemma}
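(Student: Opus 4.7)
The plan is to prove the lemma in the classical Karamata style: pass to the logarithmic additive form, construct the smooth asymptotic by mollification, and then read off the Potter bounds.

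First, I would reduce to the slowly varying case by setting $c_1(x) := c(x) e^{-\beta \cdot x}$, so that the hypothesis translates to $c_1(t+h)/c_1(h) \to 1$ uniformly for $t$ on compacts. Taking logarithms is legitimate for $|x|$ large: the uniform-on-compacts convergence forces $c_1$ to be bounded above and bounded away from zero on any fixed compact translate once $|h|$ is sufficiently large. The hypothesis becomes the additive version $L(t+h) - L(h) \to 0$ uniformly on compacts, where $L = \log c_1$, and it suffices to produce a $C^{\infty}$ function $b$ with $b^{(\alpha)}(x) \to 0$ for every $|\alpha| \geq 1$ and $L(x) - b(x) \to 0$ as $|x| \to \infty$.

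Second, I would mollify. Pick $\varphi \in \mathcal{D}(\mathbb{R}^{n})$ nonnegative with $\int \varphi = 1$, supported in the unit ball, and set $b(x) := (L \ast \varphi)(x) = \int L(x - y) \varphi(y) \, dy$ for $|x|$ large (extending smoothly inside). Smoothness is automatic; and
\begin{equation*}
b(x) - L(x) = \int \bigl( L(x - y) - L(x) \bigr) \varphi(y) \, dy \to 0
\end{equation*}
by uniform convergence on $\operatorname{supp} \varphi$. For $|\alpha| \geq 1$, the key trick is that $\int \varphi^{(\alpha)} = 0$, so
\begin{equation*}
b^{(\alpha)}(x) = \int L(x - y) \varphi^{(\alpha)}(y) \, dy = \int \bigl( L(x - y) - L(x) \bigr) \varphi^{(\alpha)}(y) \, dy \to 0
\end{equation*}
by the same argument. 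Exponentiating returns $c(x) \sim e^{\beta \cdot x + b(x)}$. The converse direction follows immediately: given such a representation, writing $b(t+h) - b(h) = \int_{0}^{1} \nabla b(h + st) \cdot t \, ds$ and invoking $\nabla b \to 0$ produces (\ref{Seq2}) uniformly on compacts.

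Finally, the Potter estimates (\ref{Seq5}) follow from the same integral identity for $b(t+h) - b(h)$: given $\varepsilon > 0$, choose $M$ so that $|\nabla b(y)| \leq \varepsilon/2$ for $|y| \geq M$; then for $(t, h)$ with $|h| \geq M + |t|$ one obtains $|\log(c(t+h)/c(h)) - \beta \cdot t| \leq (\varepsilon/2) |t|$ after absorbing the $o(1)$ error, which is stronger than needed. The remaining ``near the origin'' region is handled by the local boundedness of $c$ together with the fact that the representation forces $c$ to be bounded below on bounded sets once its behavior at infinity is controlled; the finite slack there is absorbed into $a_{\varepsilon}$ and $A_{\varepsilon}$. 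The main obstacle I anticipate is the technical bookkeeping around the reduction to the additive setting—specifically, the justification that $c$ is bounded above and away from zero on the relevant translated compacts—everything else being a standard convolution-smoothing computation.
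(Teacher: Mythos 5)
Your construction coincides with the paper's proof: reduce to $\beta=0$, mollify the logarithm ($b=\log c_1\ast\varphi$ with $\int\varphi=1$), obtain $b-\log c_1\to 0$ from the uniform convergence on compacts and $b^{(\alpha)}\to 0$ from $\int\varphi^{(\alpha)}=0$, and prove the converse via the mean value theorem applied to $b$. Your preliminary remark that (\ref{Seq2}) makes $\log c_1$ uniformly bounded on balls $B(h,1)$ for $|h|$ large (so the mollification and the differences $L(x-y)-L(x)$ are legitimate) is a detail the paper's proof silently skips, and it is a genuine improvement.

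The one step that does not work as you wrote it is the passage to (\ref{Seq5}) outside the region $\{|h|\geq M+|t|\}$. The complement of that region is not a ``near the origin'' set: it contains, for instance, $h=-R e$ and $t=2R e$ with $e$ a unit vector and $R\to\infty$, where both $h$ and $t+h$ are far from the origin but the segment $[h,t+h]$ passes through it, and also $h$ fixed with $|t|\to\infty$; local boundedness of $c$ near the origin gives no bound of the form $\varepsilon|t|+O(1)$ in these cases. The repair uses exactly the ingredient the paper invokes (global boundedness of $|\nabla b|$): in $b(t+h)-b(h)=\int_{0}^{1}\nabla b(h+st)\cdot t\,ds$, split the $s$-integral according to whether $|h+st|\geq M$ or $|h+st|<M$; the first portion contributes at most $\varepsilon|t|$, while the second has $s$-measure at most $\min\left(1,2M/|t|\right)$, so with $B=\sup|\nabla b|$ it contributes at most $2MB$, an additive constant absorbed into $a_{\varepsilon}$ and $A_{\varepsilon}$. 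Separately, replacing $e^{\beta\cdot x+b(x)}$ by $c$ for \emph{all} arguments (not just large ones) requires $c$ to be bounded below on compact sets, since (\ref{representation eq}) controls nothing near the origin; this assumption is implicit in the paper's ``we may assume $c(x)=e^{\beta\cdot x+b(x)}$'' as well, so it is a shared, minor caveat rather than a defect of your argument alone, but your phrase ``the representation forces $c$ to be bounded below on bounded sets'' is not literally correct and should be replaced by that explicit assumption.
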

\begin{proof} By considering $e^{-\beta\cdot t}c(t)$, one may assume that $\beta=0$.
Let $\varphi\in \mathcal{D}(\mathbb{R}^{n})$ be such that $\int_{\mathbb{R}^{n}} \varphi(t)dt=1$. Set $b(x)= \int_{\mathbb{R}^{n}} \log c(t+x)\varphi(t)dt$. Clearly, $b\in C^{\infty}(\mathbb{R}^{n})$ and the relation (\ref{Seq2}) implies that $b(x)=\log c(x)+o(1)$ and $b^{(\alpha)}(x)=o(1)$ as $|x|\to\infty$, for each multi-index $|\alpha|>0$. This gives (\ref{representation eq}). Conversely, since $c$ is locally bounded, we may assume that actually $c(x)=e^{\beta\cdot x +b(x)}$, but $|b(t+h)-b(h)|\leq |t|\max_{\xi\in[h,t+h]}|\nabla b(\xi)|,$ which gives (\ref{Seq2}). Using the fact that $|\nabla b|$ is bounded, the same argument yields (\ref{Seq5}).
\end{proof}

Observe that Lemma \ref{lemma c} also tells us that the space $\mathcal{B}'_{c}(\mathbb{R}^{n})$ is well-defined for $c$. We can now characterize (\ref{Seq3}) in terms of the STFT. The direct part of the following theorem is an Abelian result, while the converse may be regarded as a Tauberian theorem.

\begin{theorem} \label{te1} Let $f\in\mathcal{K}_{1}'(\mathbb{R}^{n})$ and $\psi \in \mathcal K_{1}(\mathbb R^{n} )\backslash\{0\}$. If $f\in \mathcal K'_{1}(\mathbb {R}^{n} )$ has the S-asymptotic behavior (\ref{Seq3})
then, for every $\lambda\geq0$,
\begin{equation}\label{consab0}
\lim_{|h|\to\infty}e^{2 \pi i z\cdot h}\frac{V_{\psi} f(x+h,z)}{c(h)}= V_\psi g(x,z),
\end{equation}
uniformly for $z\in \Pi_{\lambda}$ and $x$ in compact subsets of $\mathbb R^{n}$.

Conversely, suppose that the limits

\begin{equation}\label{limit2} \lim_{|x|\to \infty }e^{2\pi i \xi\cdot x}\frac{V_{\psi}f(x,\xi)}{c(x)}=J(\xi)\in\mathbb{C}\end{equation}
exist for almost every $\xi\in\mathbb{R}^{n}$. If there is $s\in\mathbb{R}$ such that
\begin{equation}\label{Taubeq}
\sup_{(x,\xi)\in\mathbb {R}^{2n}}(1+|\xi|)^{-s}\frac{|V_\psi f(x,\xi)|}{c(x)}<\infty,
\end{equation}
then $f$ has the S-asymptotic behavior (\ref{Seq3}) with $g(t)=C e^{\beta \cdot t}$, where the constant is completely determined by the equation $J(\xi)=C \overline{\widehat{\psi}(-\xi+i\beta/(2\pi))}$.
\end{theorem}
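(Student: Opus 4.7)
For the Abelian half, I would start from the identity
$$V_\psi(T_{-h}f)(x,z) = e^{2\pi i z\cdot h}\, V_\psi f(x+h,z),$$
which rewrites (\ref{consab0}) as $V_\psi(T_{-h}f/c(h))(x,z) \to V_\psi g(x,z)$ on $K\times \Pi_\lambda$. A Leibniz-rule estimate in the spirit of the proof of Proposition \ref{neprekinatSTFT} shows that $\{\overline{M_z T_x\psi}:(x,z)\in K\times \Pi_\lambda\}$ is bounded in $\mathcal{K}_1(\mathbb{R}^n)$ for every compact $K\subset\mathbb{R}^n$; since $\mathcal{K}_1(\mathbb{R}^n)$ is an FS-space (hence Montel), the strong convergence in $\mathcal{K}'_1(\mathbb{R}^n)$ guaranteed by (\ref{Seq3}) is automatically uniform on bounded subsets of $\mathcal{K}_1(\mathbb{R}^n)$, delivering (\ref{consab0}).

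For the converse, I would first observe that Lemma \ref{lemma c} together with the Potter-type estimate (\ref{Seq5}) makes $c$ an exponentially moderate weight in the sense of (\ref{weight omega}). The equivalence $(a)\Leftrightarrow(c)$ of Theorem \ref{omega bounded} applied to hypothesis (\ref{Taubeq}) then gives $f\in\mathcal{B}'_c(\mathbb{R}^n)$; in particular, $\{T_{-h}f/c(h):h\in\mathbb{R}^n\}$ is bounded in $\mathcal{K}'_1(\mathbb{R}^n)$, and this boundedness is the decisive Tauberian input. Fixing $\varphi\in\mathcal{K}_1(\mathbb{R}^n)$ and a synthesis window $\gamma$ for $\psi$, I would then apply the desingularization formula (\ref{desingular}) (with $\eta=0$) to $T_{-h}f$, yielding
$$\frac{\langle T_{-h}f,\varphi\rangle}{c(h)}=\frac{1}{(\gamma,\psi)_{L^2}}\iint_{\mathbb{R}^{2n}}\!\left[\frac{e^{2\pi i\xi\cdot(x+h)}V_\psi f(x+h,\xi)}{c(x+h)}\right]\frac{c(x+h)}{c(h)}\,e^{-2\pi i\xi\cdot x}V_{\overline{\gamma}}\varphi(x,-\xi)\,dx\,d\xi.$$
Hypothesis (\ref{limit2}) and the limit (\ref{Seq2}) show that the integrand converges a.e.\ to $J(\xi)\,e^{(\beta-2\pi i\xi)\cdot x}V_{\overline{\gamma}}\varphi(x,-\xi)$ as $|h|\to\infty$.

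To interchange limit and integral I would invoke Lebesgue dominated convergence. Combining (\ref{Taubeq}) with the Potter bound (\ref{Seq5}) majorises the integrand, uniformly in $h$, by $C_\varepsilon(1+|\xi|)^{s}\,e^{\beta\cdot x+\varepsilon|x|}|V_{\overline{\gamma}}\varphi(x,-\xi)|$, which is integrable because $V_{\overline{\gamma}}\varphi\in\mathcal{K}_1(\mathbb{R}^n)\widehat{\otimes}\mathcal{U}(\mathbb{C}^n)$ decays exponentially in $x$ and faster than any polynomial in $\xi$. This produces a limit $g\in\mathcal{K}'_1(\mathbb{R}^n)$ with $T_{-h}f/c(h)\to g$, and the structural result recalled right after (\ref{Seq1}) forces $g(t)=Ce^{\beta\cdot t}$. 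To pin down $C$, I would apply the already-established Abelian direction to $g$ itself at $(x,z)=(0,\xi)$: the left-hand side of (\ref{consab0}) is then precisely $J(\xi)$, while a direct Fourier computation gives $V_\psi g(0,\xi)=C\,\overline{\widehat{\psi}(-\xi-i\beta/(2\pi))}$. The main technical obstacle is this dominated-convergence step: $\varepsilon$ must be chosen small enough that the factor $e^{\varepsilon|x|}$ is swallowed by the exponential decay of $V_{\overline{\gamma}}\varphi$ in $x$, which is where the tensor product structure $\mathcal{K}_1\widehat{\otimes}\mathcal{U}$ and the bound (\ref{Taubeq}) must be balanced against the weight class determined by $c$.
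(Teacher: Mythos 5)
Your proposal is correct, and the two halves compare differently with the paper. The Abelian half is essentially the paper's own argument: rewrite (\ref{consab0}) as the action of $T_{-h}f/c(h)$ on $\overline{M_{z}T_{x}\psi}$ and use that this family of windows is bounded (the paper says compact) in $\mathcal{K}_{1}(\mathbb{R}^{n})$ for $(x,z)\in K\times\Pi_{\lambda}$, so that the convergence furnished by (\ref{Seq3}) is uniform there. The converse is where you genuinely diverge: the paper, after converting (\ref{Taubeq}) into boundedness of $\{T_{-h}f/c(h)\}$ via Theorem \ref{omega bounded}, only checks the limit on the linear span $D$ of the shifts $\overline{M_{\xi}T_{x}\psi}$ with $\xi$ in the set where (\ref{limit2}) holds, proves $D$ dense in $\mathcal{K}_{1}(\mathbb{R}^{n})$ by the desingularization formula (\ref{desingular}) plus Hahn--Banach, and then upgrades to convergence on all of $\mathcal{K}_{1}(\mathbb{R}^{n})$ by Banach--Steinhaus and the Montel property. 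You instead apply (\ref{desingular}) to $T_{-h}f$ against an arbitrary $\varphi$ and pass to the limit under the integral by dominated convergence, with (\ref{Taubeq}) and the Potter bound (\ref{Seq5}) giving an $h$-independent majorant which is integrable because $V_{\overline{\gamma}}\varphi\in\mathcal{K}_{1}(\mathbb{R}^{n})\widehat{\otimes}\mathcal{U}(\mathbb{C}^{n})$ (take $k>|\beta|+\varepsilon$ and $k>s+n$ in the $\rho_{k}$-estimate); this is sound and avoids the density/Hahn--Banach step, so for the theorem as stated your route is somewhat more direct and elementary. What the paper's route buys is reusability: the density of $D$ in $M^{1}_{m_{\varepsilon}}(\mathbb{R}^{n})$ (resp. $M^{1}_{c_{s}}$) is precisely the mechanism behind Remark \ref{rk5.1} and Corollary \ref{TaubCor}, where the tensor-product decay your majorant exploits is no longer available. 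Two minor remarks: in your version the boundedness of the translates is not really the ``decisive'' input---(\ref{Taubeq}) enters directly through the majorant, and continuity of the limit functional already follows from Banach--Steinhaus on the barreled space $\mathcal{K}_{1}(\mathbb{R}^{n})$; and the identification of $C$ should be phrased as applying the Abelian half to $f$ (whose S-asymptotic limit is $g$) at $(x,z)=(0,\xi)$ and comparing with (\ref{limit2}) for a.e.\ $\xi$, which is exactly the paper's comparison of (\ref{consab0}) with (\ref{limit2}).
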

\begin{remark}\label{rk5.1} Assume (\ref{Taubeq}). Consider a weight of the form $m_{\varepsilon}(x,\xi)=e^{\beta\cdot x+\varepsilon |x|}(1+|\xi|)^{s}$ with $\varepsilon>0$. It will be shown below that the asymptotics (\ref{Seq3}) holds in the weak$^{\ast}$ topology of $M^{\infty}_{1/m_{\varepsilon}}(\mathbb{R}^{n})$, i.e., $(f\ast \check{\varphi})(h)\sim c(h)\langle g, \varphi\rangle$ as $|h|\to\infty$ for every $\varphi$ in the modulation space $M_{m_{\varepsilon}}^{1}(\mathbb{R}^{n})$. Furthermore, one may use in (\ref{limit2}) and (\ref{Taubeq}) a window $\psi\in M_{m_{\varepsilon}}^{1}(\mathbb{R}^{n})\backslash\{0\}$.
\end{remark}
\begin{proof} Fix $\lambda\geq 0$ and a compact $K\subset \mathbb{R}^{n}$. Note that the set
$$\{\overline{M_{z}T_{x}\psi}: \: (x,z)\in K\times \Pi_{\lambda}\}$$
is compact in $\mathcal{K}_{1}(\mathbb{R}^{n})$. By the Banach-Steinhaus theorem,
$$\lim_{|h| \rightarrow \infty}e^{2 \pi i zh}\frac {V_{\psi}f(x+h,z)}{c(h)}
=\lim_{|h|\rightarrow \infty} \Big\langle\,\frac{T_{-h}f}{c(h)},\overline{M_{z}T_{x}\psi}\Big\rangle=
\Big\langle g,\overline{M_{z}T_{x}\psi}\Big\rangle,
$$
uniformly with respect to $(x,z)\in K\times \Pi_{\lambda}$, as asserted in (\ref{consab0}).

Conversely, assume (\ref{limit2}) and (\ref{Taubeq}). Let $H=\{\xi\in\mathbb{R}^{n}:\: \mbox{(\ref{limit2}) holds} \}$. In view of Theorem \ref{omega bounded}, we have that $f\in \mathcal{B}'_{c}(\mathbb{R}^{n})$ or, equivalently,  $\{T_{-h}f/c(h):\: h\in\mathbb{R}^{n}\}$ is bounded in $\mathcal{K}'_{1}(\mathbb{R}^{n})$. By the Banach-Steinhaus theorem and the Montel property of $\mathcal{K}'_{1}(\mathbb{R}^{n})$, $T_{-h}f/c(h)$ converges strongly to a distribution $g$ in $\mathcal{K}'_{1}(\mathbb{R}^{n})$ if and only if $\lim_{|h|\to \infty} \langle T_{-h}f,\varphi \rangle/c(h)$ exists for $\varphi$ in a dense subspace of $\mathcal{K}_{1}(\mathbb{R}^{n})$. Let $D$ be the linear span of $\{\overline{M_{\xi}T_{x}\psi}:\: (x,\xi)\in\mathbb{R}^{n}\times H\}$. By the desingularization formula (\ref{desingular}) and the Hahn-Banach theorem, we have that $D$ is dense in $\mathcal{K}_{1}(\mathbb{R}^{n})$. Thus, it suffices to verify that $\lim_{|h|\to\infty}\langle T_{-h}f,  \overline{M_{\xi}T_{x}\psi}\rangle/c(h)$ exists for each $(x,\xi)\in \mathbb{R}^{n}\times H$. But in this case (\ref{Seq2}) and (\ref{limit2}) yield
\begin{align*}
\lim_{|h|\to\infty}\frac{\langle T_{-h}f,  \overline{M_{\xi}T_{x}\psi}\rangle}{c(h)}&=\lim_{|h|\to\infty} e^{2\pi i \xi\cdot h}\frac{V_{\psi}f(x+h,\xi)}{c(h)}
\\
&
=e^{(\beta -2\pi i \xi)\cdot x}\lim_{|h|\to\infty} e^{2\pi i \xi\cdot (x+h)}\frac{V_{\psi}f(x+h,\xi)}{c(h+x)}
\\
&
= e^{(\beta -2\pi i \xi)\cdot x} J(\xi),
\end{align*}
as required. We already know that $g(t)=Ce^{\beta\cdot t}$. Comparison between (\ref{consab0}) and (\ref{limit2}) leads to $J(\xi)=V_{\psi}g(0,\xi)= C\int_{\mathbb{R}^{n}}\overline{\psi(t)} e^{\beta\cdot t-2\pi i\xi\cdot t}dt$. To show the assertion from Remark \ref{rk5.1}, note first that, by using (\ref{Seq5}), one readily verifies that
$$\sup_{h\in\mathbb{R}^{n}}\frac{ ||T_{-h}f||_{M^{\infty}_{1/m_{\varepsilon}}}}{c(h)}<\infty.$$
Since we have the dense embedding $\mathcal{K}_{1}(\mathbb{R}^{n})\hookrightarrow M^{1}_{m_{\varepsilon}}(\mathbb{R}^{n})$, we also have that $D$ is dense in $M^{1}_{m_{\varepsilon}}(\mathbb{R}^{n})$ and the assertion follows at once. The fact that one may use a window $\psi\in M_{m_{\varepsilon}}^{1}(\mathbb{R}^{n})\backslash\{0\}$ in (\ref{limit2}) and (\ref{Taubeq}) follows in a similar fashion because in this case the desingularization formula (\ref{desingular}) still holds.

\end{proof}

Let us make two addenda to Theorem \ref{te1}. The ensuing corollary improves Remark \ref{rk5.1}, provided that $c$ satisfies the extended submultiplicative condition (for some $A>0$):
\begin{equation}
\label{eq5.9} c(t+h)\leq A c(t)c(h).
\end{equation}
\begin{corollary} \label{TaubCor}Assume that $c$ satisfies (\ref{eq5.9}) and set $c_{s}(x,\xi)=c(x)(1+|\xi|)^{s}$, $s\in\mathbb{R}$. If $f\in M^{\infty}_{1/c_{s}}(\mathbb{R}^{n})$ and there is $\psi\in M^{1}_{c_{s}}(\mathbb{R}^{n})\setminus\{0\}$ such that the limits (\ref{limit2}) exist for almost every $\xi\in\mathbb{R}^{n}$, then, for some $g$, the $S$-asymptotic behavior (\ref{Seq3}) holds weakly$^{\ast}$ in $M^{\infty}_{1/c_{s}}(\mathbb{R}^{n})$, that is, $(f\ast \check{\varphi})(h)\sim c(h)\langle g, \varphi\rangle$ as $|h|\to\infty$ for every $\varphi\in M_{c_{s}}^{1}(\mathbb{R}^{n})$.
\end{corollary}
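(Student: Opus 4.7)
The proof is modeled closely on the second half of Theorem \ref{te1}, with the Schwartz-type space $\mathcal{K}_1(\mathbb{R}^n)$ systematically replaced by the modulation space $M^1_{c_s}(\mathbb{R}^n)$. The extra hypothesis (\ref{eq5.9}), combined with Lemma \ref{lemma c}, ensures that $c$ (and hence $c_s$) belongs to the moderate weight class $\mathfrak{M}$; thus $M^1_{c_s}$ is invariant under time-frequency shifts, has $M^\infty_{1/c_s}$ as its Banach dual, and supports the full STFT inversion machinery of \cite{gr01} with windows taken in $M^1_{c_s}$.

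First I would establish boundedness of the net. Using $V_\psi(T_{-h}f)(x,\xi)=e^{2\pi i \xi\cdot h}V_\psi f(x+h,\xi)$, the substitution $y=x+h$, and the estimate $c(y)\leq A\,c(y-h)\,c(h)$ coming from (\ref{eq5.9}), one obtains
\[
\|T_{-h}f\|_{M^\infty_{1/c_s}}\leq A\,c(h)\,\|f\|_{M^\infty_{1/c_s}},
\]
so $\{T_{-h}f/c(h)\}_{h\in\mathbb{R}^n}$ is a bounded net in $M^\infty_{1/c_s}(\mathbb{R}^n)$.

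The delicate step is density. Let $H\subset\mathbb{R}^n$ denote the full-measure set on which (\ref{limit2}) holds, and put
\[
D=\mathrm{span}\bigl\{\overline{M_\xi T_x\psi}:(x,\xi)\in\mathbb{R}^n\times H\bigr\}\subset M^1_{c_s}(\mathbb{R}^n).
\]
I would show $D$ is dense by Hahn--Banach duality: if $F\in (M^1_{c_s})'=M^\infty_{1/c_s}$ annihilates $D$, then $V_\psi F(x,-\xi)=0$ for every $(x,\xi)\in\mathbb{R}^n\times H$. Since $\psi\in M^1_{c_s}$ is an admissible window for $M^\infty_{1/c_s}$, the function $V_\psi F$ is continuous on $\mathbb{R}^{2n}$, so the density of $H$ forces $V_\psi F\equiv 0$, and the STFT inversion in the modulation-space setting then yields $F=0$.

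Finally, for every $(x,\xi)\in\mathbb{R}^n\times H$ the very same computation performed in Theorem \ref{te1}, using (\ref{Seq2}) and (\ref{limit2}), gives
\[
\lim_{|h|\to\infty}\frac{\langle T_{-h}f,\overline{M_\xi T_x\psi}\rangle}{c(h)}=e^{(\beta-2\pi i\xi)\cdot x}J(\xi).
\]
Combining this pointwise convergence on the dense subspace $D$ with the uniform bound of the second step, a Banach--Steinhaus / equicontinuity argument produces $g\in M^\infty_{1/c_s}$ with $T_{-h}f/c(h)\to g$ weakly-$*$ in $M^\infty_{1/c_s}$, which is precisely the asserted asymptotics for test functions in $M^1_{c_s}$. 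The identification $g(t)=Ce^{\beta\cdot t}$ is inherited verbatim from Theorem \ref{te1}. The principal obstacle is the density of $D$: it rests on the injectivity and inversion of the STFT when the analysis window is merely in $M^1_{c_s}$, which is available precisely because the submultiplicative hypothesis (\ref{eq5.9}) places $c_s$ inside $\mathfrak{M}$.
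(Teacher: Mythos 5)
Your proposal is correct and follows essentially the same route as the paper: the translation bound $\|T_{-h}f\|_{M^{\infty}_{1/c_{s}}}\leq A\,c(h)\|f\|_{M^{\infty}_{1/c_{s}}}$ from (\ref{eq5.9}), density of the span $D$ of time-frequency shifts of $\psi$ in $M^{1}_{c_{s}}(\mathbb{R}^{n})$ via Hahn--Banach and the desingularization/inversion formula for $M^{1}$-windows, the pointwise limits on $D$ computed as in Theorem \ref{te1}, and a Banach--Steinhaus argument giving weak$^{\ast}$ convergence. Your Hahn--Banach density argument in the $(M^{1}_{c_{s}},M^{\infty}_{1/c_{s}})$ duality is precisely the ``similar argument'' the paper's terse proof alludes to, so there is nothing genuinely different to compare.
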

\begin{proof}
We retain the notation from the proof of Theorem \ref{te1}. The assumption $f\in M^{\infty}_{1/c_{s}}(\mathbb{R}^{n})$ of course tells us that (\ref{Taubeq}) holds. Employing the hypothesis (\ref{eq5.9}), one readily sees that $\sup_{h\in\mathbb{R}^{n}}\|T_{-h}f\|_{M^{\infty}_{1/c_{s}}}/c(h)<\infty$. A similar argument to the one used in the proof of Theorem \ref{te1} yields that the set $D$ associated to $\psi$ is dense in $M^{1}_{c_{s}}(\mathbb{R}^{n})$, which as above yields the result.
\end{proof}
In dimension $n=1$, the next theorem actually obtains the ordinary asymptotic behavior of $f$ in case it is a regular distribution on $(0,\infty)$ satisfying an additional Tauberian condition. We fix $m_{\varepsilon}$ as in Remark \ref{rk5.1} and $c_{s}$ as in Corollary \ref{TaubCor}.

\begin{theorem}
\label{te2} Let $f\in M_{1/c_{s}}^{\infty}(\mathbb{R})$. Suppose that
\begin{equation}\label{limit3}
\lim_{x\to \infty }e^{2\pi i \xi\cdot x}\frac{V_{\psi}f(x,\xi)}{c(x)}=J(\xi)\in\mathbb{C},
\end{equation}
for almost every $\xi\in \mathbb{R}$, where $\psi \in M^{1}_{m_{\varepsilon}}(\mathbb R)\setminus\{0\}$  (resp. $\psi \in M^{1}_{c_{s}}(\mathbb R)\setminus\{0\}$ if $c$ satisfies (\ref{eq5.9})). If there is $\alpha\geq 0$ such that $e^{\alpha t} f(t)$ is a positive non-decreasing function on the interval $(0,\infty)$, then
\begin{equation}
\label{asympeq}
\lim_{t\to\infty}\frac{f(t)}{c(t)}=C,
\end{equation}
where $C$ is the constant from Theorem \ref{te1}.
\end{theorem}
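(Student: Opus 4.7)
The strategy is the standard Wiener-Karamata type Tauberian device: convert the weak$^{\ast}$ $S$-asymptotic information into a pointwise limit by sandwiching $f(h)$ between two convolutions $(f\ast \check{\varphi_{\delta}})(h\pm\delta)$, using the monotonicity of $e^{\alpha t}f(t)$ to control the error. First, the hypotheses $f\in M^{\infty}_{1/c_{s}}(\mathbb{R})$ (which is precisely (\ref{Taubeq})) together with the existence of the limits (\ref{limit3}) allow us to apply Theorem \ref{te1} directly in the case $\psi\in M^{1}_{m_{\varepsilon}}(\mathbb{R})$, or Corollary \ref{TaubCor} in the case $\psi\in M^{1}_{c_{s}}(\mathbb{R})$ when $c$ satisfies (\ref{eq5.9}). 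Either way, $f$ has $S$-asymptotic behavior $f(t+h)\sim c(h)\, Ce^{\beta t}$ that holds weakly$^{\ast}$ against every $\varphi$ in the corresponding predual modulation space; since $\mathcal{D}(\mathbb{R})$ embeds continuously into both $M^{1}_{m_{\varepsilon}}(\mathbb{R})$ and $M^{1}_{c_{s}}(\mathbb{R})$, we obtain in particular
\begin{equation*}
\lim_{h\to\infty}\frac{(f\ast \check{\varphi})(h)}{c(h)}= C\int_{\mathbb{R}}e^{\beta t}\varphi(t)\,dt, \qquad \varphi\in \mathcal{D}(\mathbb{R}).
\end{equation*}

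Next I would exploit the monotonicity. Setting $F(t)=e^{\alpha t}f(t)$, which is non-decreasing and positive on $(0,\infty)$, one checks that for any $\delta\in(0,1)$, any $h>\delta+1$, and any $s\in[-\delta,\delta]$,
\begin{equation*}
e^{-2\alpha\delta}f(h-\delta)\le f(h+s)\le e^{2\alpha\delta}f(h+\delta).
\end{equation*}
Fix a non-negative $\varphi\in\mathcal{D}(\mathbb{R})$ with $\operatorname{supp}\varphi\subset[-1,1]$ and $\int\varphi(t)\,dt=1$, and let $\varphi_{\delta}(t)=\delta^{-1}\varphi(t/\delta)$. Integrating the previous inequalities against $\varphi_{\delta}$ (recall $(f\ast\check{\varphi_{\delta}})(h)=\int f(h+u)\varphi_{\delta}(u)\,du$) and then translating, one gets
\begin{equation*}
e^{-2\alpha\delta}(f\ast\check{\varphi_{\delta}})(h-\delta)\le f(h)\le e^{2\alpha\delta}(f\ast\check{\varphi_{\delta}})(h+\delta).
\end{equation*}

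Finally, divide by $c(h)$ and take $\limsup$/$\liminf$. Using the first step together with the shift relation $c(h\pm\delta)/c(h)\to e^{\pm\beta\delta}$ furnished by (\ref{Seq2}),
\begin{equation*}
\limsup_{h\to\infty}\frac{f(h)}{c(h)}\le e^{(2\alpha+\beta)\delta}\,C\int_{\mathbb{R}}e^{\beta t}\varphi_{\delta}(t)\,dt,\qquad \liminf_{h\to\infty}\frac{f(h)}{c(h)}\ge e^{-(2\alpha+\beta)\delta}\,C\int_{\mathbb{R}}e^{\beta t}\varphi_{\delta}(t)\,dt.
\end{equation*}
Since $\int e^{\beta t}\varphi_{\delta}(t)\,dt\to 1$ as $\delta\to 0^{+}$, letting $\delta\to 0^{+}$ squeezes both bounds to $C$, which gives (\ref{asympeq}). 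The only step requiring genuine care is the first one: one has to make sure that the weak$^{\ast}$ convergence obtained from Theorem \ref{te1}/Corollary \ref{TaubCor} is valid against compactly supported smooth $\varphi$ regardless of which window class was used; this is the main (but minor) technical checkpoint, and it is handled uniformly by the density of $\mathcal{D}(\mathbb{R})$ in the respective test spaces. Beyond that, the argument is a routine sandwich computation.
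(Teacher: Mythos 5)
Your proposal is correct and follows essentially the paper's own route: one first extracts the (one-sided) S-asymptotics $f(t+h)\sim c(h)\,Ce^{\beta t}$ from the Tauberian machinery behind Theorem \ref{te1} (the hypothesis $f\in M^{\infty}_{1/c_{s}}(\mathbb{R})$ being exactly (\ref{Taubeq})), and then squeezes $f(h)/c(h)$ via the monotonicity of $e^{\alpha t}f(t)$ against non-negative bumps of shrinking support --- the paper performs the same sandwich after substituting $\tilde{f}(t)=e^{\alpha t}f(t)$, $b(t)=e^{\alpha t}c(t)$ and testing on $\varphi$ supported in $(0,\varepsilon)$ or $(-\varepsilon,0)$, whereas you keep $f$ and absorb the harmless factor $e^{\pm 2\alpha\delta}$, which is equivalent. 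The one point to phrase more carefully is that Theorem \ref{te1} cannot be invoked verbatim, since its hypothesis (\ref{limit2}) is a limit as $|x|\to\infty$ while (\ref{limit3}) only provides $x\to+\infty$; as the paper indicates, one reruns the proof of Theorem \ref{te1} for $h\to+\infty$ (Banach--Steinhaus plus density of the linear span of the relevant time-frequency shifts), which is all your sandwich actually uses.
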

\begin{proof} Using (\ref{limit3}), the same method from Theorem \ref{te1} applies to show that $f(t+h)\sim C g(t)$ in $\mathcal{K}_{1}'(\mathbb{R})$ as $h\to\infty$, where $g(t)=Ce^{\beta t}$. We may assume that $\alpha\geq- \beta$. Set $\tilde{f}(t)=e^{\alpha t}f(t)$,  $b(t)=e^{\alpha t}c(t)$, and $r=\alpha+\beta\geq 0$. It is enough to show that $\tilde{f}(t)\sim C b(t)$ as $t\to\infty$, whence (\ref{asympeq}) would follow. By (\ref{Seq3}), we have that
\begin{equation*}
\tilde{f}(t+h)\sim b(h) Ce^{rt} \ \ \ \mbox{as }h\to\infty \mbox{ in } \mathcal{K}_{1}'(\mathbb{R}),
\end{equation*}
i.e.,
\begin{equation}
\label{asympeq2}
\langle \tilde{f}(t+h),\varphi(t)\rangle\sim C b(h)\int_{-\infty}^{\infty}e^{r t}\varphi(t)dt ,  \ \ \ \forall\varphi\in\mathcal{K}_{1}(\mathbb{R}^{n}).
\end{equation}
Let $\varepsilon>0$ be arbitrary. Choose a non-negative test function $\varphi\in\mathcal{D}(\mathbb{R})$ such that $\operatorname*{supp}\varphi\subseteq(0,\varepsilon)$ and $\int_{0}^{\varepsilon}\varphi(t)\mathrm{d}t=1$. Using the fact that $\tilde{f}$ is non-decreasing on $(0,\infty)$ and (\ref{asympeq2}), we obtain
\begin{align*}
\limsup_{h\to\infty}\frac{\tilde{f}(h)}{b(h)}&=\limsup_{h\to\infty} \frac{\tilde{f}(h)}{b(h)}\int_{0}^{\varepsilon}\varphi(t)dt\leq\lim_{h\to\infty}\frac{1}{b(h)}\int_{0}^{\varepsilon}\tilde{f}(t+h)\varphi(t)dt
\\
&
=\lim_{h\to\infty}\frac{\langle \tilde{f}(t+h),\varphi(t)\rangle}{b(h)}
=C \int_{0}^{\varepsilon}e^{r t}\varphi(t)\mathrm{d}t \leq C e^{r \varepsilon},
\end{align*}
taking $\varepsilon\to0^{+}$, we have shown that $\limsup_{h\to\infty}\tilde{f}(h)/b(h)\leq C$. Similarly, choosing in (\ref{asympeq2}) a non-negative $\varphi$ such that $\operatorname*{supp}\varphi\subseteq(-\varepsilon,0)$ and $\int_{-\varepsilon}^{0}\varphi(t)dt=1$, one obtains $\liminf_{h\to\infty}\tilde{f}(h)/b(h)\geq C$. This shows that $\tilde{f}(t)\sim C b(t)$ as $t\to\infty$, as claimed.
\end{proof}

We conclude this article with a proof of Theorem \ref{STFT th1}.

\begin{proof}[Proof of Theorem \ref{STFT th1}] Set $c(t)=e^{\beta t}L(e^{|t|})$ and, as before (with $s=0$), $c_{0}(x,\xi)=c(x)$ and  $m_{\varepsilon}(x,\xi)=e^{\beta x+\varepsilon|x|}$. Note that (\ref{eq2}) is the same as (\ref{asympeq}). Let us first verify that $\psi\in M^{1}_{m_{\varepsilon}}(\mathbb{R})$. In fact, if we take another window $\gamma\in \mathcal{K}_{1}(\mathbb{R})$, we have
\begin{align*}
&\iint_{\mathbb{R}^{2}}|V_{\gamma}\psi(x,\xi)|e^{\beta x+\varepsilon|x|}dxd\xi=\iint_{\mathbb{R}^{2}}(1+|\xi|^{3})|V_{\gamma}\psi(x,\xi)|e^{\beta x+\varepsilon|x|}dx \frac{d\xi}{1+|\xi|^{3}}
\\
&\leq \tilde{C}\left(\iint_{\mathbb{R}^{2}}\psi(t-x)|\gamma(t)|e^{\beta x+\varepsilon|x|}dtdx+\sum_{j=0}^{3} \iint_{\mathbb{R}^{2}}|\psi^{(j)}(t-x)\gamma^{(3-j)}(t)|e^{\beta x+\varepsilon|x|}dtdx\right),
\end{align*}
which is finite (a similar argument shows that $\psi\in M^{1}_{c_{0}}(\mathbb{R})$ if $\int_{-\infty}^{\infty}(\psi (t)+|\psi' (t)|+|\psi'' (t)|)L(e^{|t|})e^{\beta t} dt<\infty$). In view of Theorem \ref{te2}, it is enough to establish  $f\in M^{\infty}_{1/c_{0}}(\mathbb{R})$. Let us first show the crude bound $f(t)=O(c(t))$. Set $A_{1}=\int_{0}^{\infty}\psi(t)dt<\infty$. Since $f$ is non-decreasing, we have
$$
f(x)\leq \frac{1}{A_{1}}\int_{0}^{\infty}f(t+x)\psi(t)dt\leq \frac{1}{A_1}\int_{0}^{\infty}f(t)\psi(t-x)dt\leq A_{2} c(x),
$$
because of (\ref{eq1}) with $\xi=0$. Thus
$$
|V_{\psi}f(x,\xi)|\leq A_{2}\int_{0}^{\infty}c(t)\psi(t-x)dt\leq c(x)\tilde{A}_{\varepsilon}\int_{-\infty}^{\infty}e^{\beta t+\varepsilon|t|}\psi(t)dt< A_{3}c(x), \  \  \ \forall(x,\xi)\in \mathbb{R}^{2}
$$
(likewise in the other case using $L(xy)\leq AL(x)L(y)$), which completes the proof.
\end{proof}

\end{document}